\newcommand{\N}{\mathbb N}
\newcommand{\R}{\mathbb R}
\newcommand{\C}{\mathbb C}
\newcommand{\Z}{\mathbb Z}
\newcommand{\mc}{\mathcal}
\newcommand{\bb}{\mathbb}
\newcommand{\Hess}{\operatorname{Hess}}
\newcommand{\Ric}{\operatorname{Ric}}
\newcommand{\GL}{\operatorname{GL}}
\newcommand{\SL}{\operatorname{SL}}
\newcommand{\SU}{\operatorname{SU}}
\newcommand{\Sp}{\operatorname{Sp}}
\newcommand{\Spin}{\operatorname{Spin}}
\newcommand{\Sym}{\operatorname{Sym}}
\newcommand{\vol}{\operatorname{vol}}
\newcommand{\tr}{\operatorname{tr}}
\newcommand{\Real}{\operatorname{Re}}
\newcommand{\Imag}{\operatorname{Im}}
\newcommand{\Stab}{\operatorname{Stab}}
\newcommand{\dil}{\operatorname{dil}}
\newcommand\halfopen[2]{\ensuremath{[#1,#2)}}
\newcommand{\bs}{\backslash}
\newtheoremstyle{break}% name
  {}%         Space above, empty = `usual value'
  {}%         Space below
  {\itshape}% Body font
  {}%         Indent amount (empty = no indent, \parindent = para indent)
  {\bfseries}% Thm head font
  {.}%        Punctuation after thm head
  {\newline}% Space after thm head: \newline = linebreak
  {}%         Thm head spec
  \theoremstyle{break}
\newtheorem{thm}{Theorem}[section]
\newtheorem{prop}{Proposition}[section]
\newtheorem{lemma}[prop]{Lemma}
\newtheorem*{unlabeled-prop}{Proposition}
\newtheorem{mainthm}{Theorem}
\newtheorem{conj}{Conjecture}
\theoremstyle{remark}
\theoremstyle{definition}
\newtheorem{defn}[prop]{Definition}
\numberwithin{equation}{section}
\begin{document}

\begin{abstract}
  A natural approach to the construction of nearly $G_2$ manifolds lies in resolving nearly $G_2$ spaces with isolated conical singularities by gluing in asymptotically conical $G_2$ manifolds modelled on the same cone. If such a resolution exits, one expects there to be a family of nearly $G_2$ manifolds, whose endpoint is the original nearly $G_2$ conifold and whose parameter is the scale of the glued in asymptotically conical $G_2$ manifold. We show that in many cases such a curve does not exist.

  The non-existence result is based on a topological result for asymptotically conical $G_2$ manifolds: if the rate of the metric is below $-7/2$, then the $G_2$ 4-form is exact if and only if the manifold is Euclidean $\R^7$.

  A similar construction is possible in the nearly K\"ahler case, which we investigate in the same manner with similar results. In this case, the non-existence results is based on a topological result for asymptotically conical Calabi--Yau 6-manifolds: if the rate of the metric is below $-3$, then the square of the Kähler form and the complex volume form can only be simultaneously exact, if the manifold is Euclidean $\R^6$.
\end{abstract}

\title{Topology of asymptotically conical Calabi--Yau and G\textsubscript{2} manifolds and desingularization of nearly K\"ahler and nearly G\textsubscript{2} conifolds}
\author{Lothar Schiemanowski}
\email{lothar.schiemanowski@math.uni-hannover.de}
\address{Institut f\"ur Differentialgeometrie \\ Universit\"at Hannover \\ Welfengarten 1, D--30167 Hannover\\ Germany}

\maketitle

\section{Introduction}

If $(L, g_L)$ is a closed Riemannian manifold of dimension $n$, then the cone $C(L) = (0,\infty)\times L$ can be equipped with the cone metric $g_C = dr^2 + r^2 g_L$. The manifold $L$ is called the \textit{link} of the cone. If $\Ric[g_L] = (n-1) g_L$, then $\Ric[g_C] = 0$. This relationship between positive Einstein manifolds and Ricci flat cones has a very interesting extension to the theory of special holonomy, elucidated by B\"ar in \cite{Baer}. This correspondence admits a uniform description in terms of Killing spinors and parallel spinors. It can also be described on the level of each of the holonomy groups whose underlying metric is Ricci flat:
\begin{enumerate}
\item holonomy group $\SU(n)$: the cone over a Sasaki--Einstein manifold is a Calabi--Yau cone,
\item holonomy group $\Sp(n)$: the cone over a 3-Sasaki manifold is a hyperk\"ahler manifold,
\item holonomy group $G_2$: the cone over a nearly K\"ahler manifold is a $G_2$ manifold,
\item holonomy group $\Spin(7)$: the cone over a nearly $G_2$ manfold is a $\Spin(7)$ manifold.
\end{enumerate}
In the first case the cone is of dimension $2n$, in the second case the cone has dimension $4n$, whereas the third case only appears in dimension $7$ and the last case in dimension $8$. The holonomy groups $G_2$ and $\Spin(7)$ are also known as the exceptional holonomy groups. Remarkably, the structure group $G_2$ appears both as a holonomy group and as the structure group of the geometry on the link of a $\Spin(7)$ cone. Similarly, the structure group $\SU(3)$ appears as a holonomy group and as the structure group of the geometry of the link of a $G_2$ cone. The groups $G_2$ and $\SU(3)$ are the only groups for which this is true and this observation is foundational for the investigations in this article. A remark concerning terminology is in place: nearly $G_2$ manifolds are also known as nearly parallel $G_2$ manifolds in the literature. We prefer the term ``nearly $G_2$'' to emphasize the analogy with nearly K\"ahler manifolds.

The purpose of this article is to study the possibility of constructing nearly $G_2$ and nearly K\"ahler manifolds on the basis of this coincidence.

Given any nearly K\"ahler manifold, one can construct an incomplete nearly $G_2$ manifold, which has two singularities, modelled on the $G_2$ cone associated to the nearly K\"ahler manifolds \cite{BM}. This construction is known as the sine cone construction. Spaces with singularities modelled on cones will be called conifolds in the sequel. It is conceivable that there are more examples of nearly $G_2$ conifolds.

On the other hand, for some of the known nearly K\"ahler manifolds there exist complete, non-compact $G_2$ manifolds, which are asymptotic at infinity to the cone over the nearly K\"ahler manifold. These will be called asymptotically conical $G_2$ manifolds.

A $G_2$ structure on a manifold $M$ is given by certain 3-forms $\varphi \in \Omega^3(M)$. This will be explained in more detail in section \ref{sec:SU3G2geometry}.

If $(N, \varphi_{AC})$ is a asymptotically conical $G_2$ manifold, then the rescalings $(N, t^3 \varphi_{AC})$ are also asymptotically conical $G_2$ manifolds for every $t > 0$. Moreover, $(N, t^3 \varphi_{AC})$ converges to a $G_2$ cone as $t \to 0$. Given a nearly $G_2$ conifold $(\overline{X}, \varphi_{CS})$, one can ``chop off'' the singularity and instead glue on a piece of $(N, t^3 \varphi_{AC})$. If $t$ is sufficiently small, the error made is small to zeroth order. The equations satisfied by the two pieces are quite different, however. Therefore it is a priori unclear if there exists a nearly $G_2$ manifold close to this resolved nearly $G_2$ conifold.

Suppose for the moment there exists a nearby nearly $G_2$ structure for every sufficiently small $t > 0$. Then we obtain a curve $(\overline{M}, \varphi(t))$ of nearly $G_2$ conifolds, such that $(\overline{M}, \varphi(t))$ converges to $(\overline{X}, \varphi_{CS})$ as $t \to 0$ and $(\overline{M}, t^{-3} \varphi(t))$ converges to $(N, \varphi_{AC})$. Such a family of nearly $G_2$ structures will be called a \textit{smooth desingularization}, if it also satisfies certain regularity assumptions. (See definition \ref{def:G2desing} for the precise meaning.)

The purpose of this article is to investigate whether such families can exist. It will turn out that this question is closely related to the topology of asymptotically conical $G_2$ manifolds.

\begin{mainthm}
  \label{thm:G2obstruction}
  Let $(\overline{X}, \varphi_{CS})$ be a nearly $G_2$ conifold with a singularity at $x_0$ modelled on the $G_2$ cone $(C = C(L), \varphi_C)$. Suppose $(N, \varphi_{AC})$ is an asymptotically conical $G_2$ manifold, asymptotic to $(C, \varphi_C)$.

  If there exists a smooth desingularization of $(\overline{X}, \varphi_{CS})$ by $(N, \varphi_{AC})$ at $x_0$, then $*\varphi_{AC}$ is exact.
\end{mainthm}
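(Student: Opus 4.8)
The plan is to exploit the defining equation of a nearly $G_2$ structure together with the scaling behaviour of the $G_2$ forms, and then to pass to the limit $t\to 0$ at the level of periods. The crucial elementary observation is that on the closed manifold $\overline{M}$ every member of the curve has an \emph{exact} coassociative form: a nearly $G_2$ structure satisfies $d\varphi(t) = c_0\,*_{\varphi(t)}\varphi(t)$ for a fixed nonzero constant $c_0$, so $\psi_t := *_{\varphi(t)}\varphi(t) = c_0^{-1}\,d\varphi(t)$ is exact. In particular every period $\int_Z \psi_t$ over a $4$-cycle $Z\subset\overline{M}$ vanishes. The whole difficulty is therefore shifted to controlling what survives of this vanishing after rescaling and taking the limit, where the target form $*\varphi_{AC}$ lives on the non-compact manifold $N$.

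First I would record the scaling behaviour. Since the $G_2$ metric depends on $\varphi$ with weight $2/3$, the rescaling $\varphi\mapsto t^{-3}\varphi$ multiplies the induced metric by $t^{-2}$ and, because the Hodge star on $3$-forms in dimension $7$ scales linearly in the length, multiplies the associated $4$-form by $t^{-4}$; explicitly, writing $\tilde\varphi(t)=t^{-3}\varphi(t)$ and $\tilde\psi_t := *_{\tilde\varphi(t)}\tilde\varphi(t)$ one gets $\tilde\psi_t = t^{-4}\,\psi_t$. As multiplication by a positive constant preserves exactness, the rescaled coassociative form $\tilde\psi_t$ is again exact on $\overline{M}$, and hence still has vanishing periods over every $4$-cycle.

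Next I would use that $(\overline{M},\tilde\varphi(t))$ converges to $(N,\varphi_{AC})$ to transport the vanishing of periods to $N$. Fix a class in $H_4(N;\R)$ and represent it by a compact $4$-cycle $Z$; because $N$ has finite topology this is always possible, and the de Rham pairing $H^4_{dR}(N)\times H_4(N;\R)\to\R$ is nondegenerate, so it suffices to prove $\int_Z *\varphi_{AC}=0$ for every such $Z$. For all sufficiently small $t$ the diffeomorphisms underlying the desingularization embed a neighbourhood of $Z$ into the glued-in region of $\overline{M}$, so $Z$ is also a $4$-cycle there and $\int_Z \tilde\psi_t=0$ by the previous step. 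Since $\tilde\psi_t\to *\varphi_{AC}$ uniformly on the compact set $Z$, letting $t\to 0$ yields $\int_Z *\varphi_{AC}=0$. As $Z$ was arbitrary, $*\varphi_{AC}$ is exact.

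The genuinely delicate point, and the place where the regularity built into Definition \ref{def:G2desing} is indispensable, is this last step: one must know that the convergence $(\overline{M},\tilde\varphi(t))\to(N,\varphi_{AC})$ is strong enough (at least $C^0$ on compact subsets, under the identifying diffeomorphisms) that periods actually converge, and that any fixed compact cycle of $N$ really is captured inside $\overline{M}$ for small $t$, rather than being pushed through the neck into the collapsing conifold region. Verifying these two compatibilities---uniform control of $\tilde\psi_t$ on compacta and the homological bookkeeping across the neck---is where the argument does its real work; the algebra of scaling weights and the exactness coming directly from the nearly $G_2$ equation are then immediate.
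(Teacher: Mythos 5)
Your argument is correct, but it takes a genuinely different route from the paper's. The paper \emph{differentiates}: writing $\hat\varphi(t) = t^{-3} G_t^* \varphi(t)$, the rescaled family satisfies $d\hat\varphi(t) = 4t\,\Theta(\hat\varphi(t))$ on each $V_{t_0} \times \halfopen{0}{t_0}$, and taking $\partial_t$ at $t = 0$ produces an explicit primitive $\eta = \partial_t \hat\varphi(0)$ with $d\eta = 4\Theta(\varphi_{AC}) = 4 *_{g_{AC}} \varphi_{AC}$ --- the ``obstruction equation''. You instead \emph{integrate}: $\Theta(\varphi(t)) = \tfrac14 d\varphi(t)$ is exact for each $t > 0$, your rescaling weight $t^{-4}$ (which is exactly the paper's homogeneity $\Theta(t^3\varphi) = t^4 \Theta(\varphi)$) preserves exactness, and since $t^{-4} G_t^* \Theta(\varphi(t)) = \Theta(\hat\varphi(t)) \to *\varphi_{AC}$ in $C^0_{\loc}$ on $N$, all periods of the closed form $*\varphi_{AC}$ over compact $4$-cycles vanish, whence exactness via de Rham and $H^4(N;\R) \cong \Hom(H_4(N;\Z), \R)$. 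The trade-off is instructive: your proof uses only $C^0_{\loc}$ convergence of the rescaled forms at $t = 0$, not the smooth extension in $t$ that definition \ref{def:G2desing} builds in and that the paper's differentiation genuinely requires, so your argument proves the conclusion under a weaker (merely continuous) notion of desingularization; conversely, the paper's derivative yields a distinguished primitive $\eta$, which it interprets as first-order data for the associated gluing problem and which iterates to higher-order obstruction equations --- information invisible to a period argument. Two small corrections: $\overline{M}$ need not be a closed smooth manifold, since definition \ref{def:G2desing} permits a nonempty residual singular set $\Sigma$ (only the singularity at $x_0$ is resolved), and $\varphi(t)$ lives on the open manifold $M = \overline{M} \setminus \Sigma$; this is harmless, because $\tfrac14 d\varphi(t)$ is globally exact on $M$ and the transported cycles $G_t(Z) \subset \widetilde{V}_t \subset M$, so Stokes on the chain $G_t(Z)$ already kills the period --- in particular the ``homological bookkeeping across the neck'' you flag as delicate is not actually needed. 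Likewise, every class in $H_4(N;\R)$ is represented by a compact singular cycle on any manifold, so the appeal to finite topology is superfluous.
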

A precise definition of the notions of cones, conifolds and asymptotically conical manifolds will be given in section \ref{sec:cones}. Smooth desingularizations will be defined in section \ref{sec:desingularization} and this theorem and the next will also be proven there.

In six dimensions a very similar construction is possible with nearly Kähler conifolds and asymptotically conical Calabi--Yau 6-manifolds. Both classes of manifolds are described by $\SU(3)$ structures. These will be parametrized by a pair $(\omega, \Omega)$, where $\omega$ is a 2-form and $\Omega$ is a complex valued 3-form.

\begin{mainthm}
  \label{thm:SU3obstruction}
  Let $(\overline{X}, \omega_{CS}, \Omega_{CS})$ be a nearly K\"ahler conifold with a singularity at $x_0$ modelled on the Calabi--Yau cone $(C = C(L), \omega_C, \Omega_C)$. Suppose $(N, \omega_{AC}, \Omega_{AC})$ is an asymptotically conical Calabi--Yau manifold asymptotic to $(C, \omega_C, \Omega_C)$.

  If there exists a smooth desingularization of $(\overline{X}, \omega_{CS}, \Omega_{CS})$ by $(N, \omega_{AC}, \Omega_{AC})$ at $x_0$, then $\omega_{AC}^2$ and $\Real \Omega_{AC}$ are exact.
\end{mainthm}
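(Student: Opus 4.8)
The plan is to exploit the fact that on any nearly Kähler manifold the forms $\omega^2$ and $\Real\Omega$ are automatically exact, with canonical primitives furnished by the structure equations; this is the $\SU(3)$ analogue of the observation, underlying Theorem~\ref{thm:G2obstruction}, that on a nearly $G_2$ manifold $*\varphi$ is exact because $d\varphi$ is proportional to it. First I would record the structure equations: with the standard normalization a nearly Kähler $\SU(3)$ structure satisfies $d\omega = 3\Real\Omega$ and $d\Imag\Omega = -2\omega^2$ (the precise constants are immaterial). Hence on the desingularizing family $(\overline{M}, \omega(t), \Omega(t))$ one has, for every $t > 0$,
\[
  \Real\Omega(t) = \tfrac13\, d\omega(t), \qquad \omega(t)^2 = -\tfrac12\, d\Imag\Omega(t),
\]
so both forms are exact on $\overline{M}$. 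The task is then to propagate this exactness to the asymptotically conical limit on $N$.

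Next I would pass to the rescaled picture. By the definition of smooth desingularization there are identifications $\Phi_t$ of the asymptotically conical region of $\overline{M}$ with exhausting compact pieces of $N$ such that $\hat\omega(t) := t^{-2}\Phi_t^*\omega(t) \to \omega_{AC}$ and $\hat\Omega(t) := t^{-3}\Phi_t^*\Omega(t) \to \Omega_{AC}$, matching the length scalings of a $2$- and a $3$-form. Since $d$ commutes with $\Phi_t^*$, the two structure equations become, after the powers of $t$ cancel,
\[
  d\hat\omega(t) = 3t\,\Real\hat\Omega(t), \qquad d\Imag\hat\Omega(t) = -2t\,\hat\omega(t)^2.
\]
The defect in closedness is now of order exactly $t$, consistent with $d\omega_{AC} = 0$ and $d\Omega_{AC} = 0$ in the Calabi--Yau limit.

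The key step is to differentiate these identities at $t = 0$. Writing $\dot\omega = \partial_t|_{t=0}\hat\omega(t)$ and $\dot\Omega = \partial_t|_{t=0}\hat\Omega(t)$, and using $\hat\omega(0) = \omega_{AC}$, $\hat\Omega(0) = \Omega_{AC}$, the Leibniz rule gives
\[
  d\dot\omega = 3\,\Real\Omega_{AC}, \qquad d\Imag\dot\Omega = -2\,\omega_{AC}^2.
\]
Thus $\Real\Omega_{AC} = \tfrac13\, d\dot\omega$ and $\omega_{AC}^2 = -\tfrac12\, d\Imag\dot\Omega$ are exact on $N$, which is exactly the assertion of the theorem.

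I expect the hard part to be justifying this differentiation \emph{globally} on the noncompact manifold $N$. One must know that $t \mapsto \hat\omega(t)$ and $t \mapsto \hat\Omega(t)$ are differentiable at $t=0$ as maps into a space of smooth forms on all of $N$ — not merely on the exhausting pieces $\Phi_t(\,\cdot\,)$ — and that $\partial_t$ commutes with $d$ there, so that $\dot\omega$ and $\Imag\dot\Omega$ are genuine smooth primitives defined on the whole of $N$, including its end. Only then does exactness hold at the level of $H^\bullet(N)$ rather than merely on compact subsets, which is the form in which the subsequent topological theorem consumes it. This uniform $C^1$-in-$t$ regularity, in a suitable weighted norm on $N$, is precisely what the regularity clauses in the definition of smooth desingularization are designed to supply, and verifying that the definition indeed yields it is where the real work lies.
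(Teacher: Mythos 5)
Your proposal is correct and takes essentially the same route as the paper: pass to the rescaled family so the nearly K\"ahler structure equations become $d\hat\omega(t) = -3t\,\Real\hat\Omega(t)$ and $d\Imag\hat\Omega(t) = 2t\,\hat\omega(t)^2$ (signs per the paper's convention, which you rightly note are immaterial), then differentiate at $t=0$ to obtain primitives for $\Real\Omega_{AC}$ and $\omega_{AC}^2$. The global justification you flag as the ``real work'' is in fact supplied directly by the exhaustion and smooth-extension clauses of definition \ref{def:G2desing}: the $t$-derivative at $t=0$ exists pointwise on each $V_{t_0}$, these sets exhaust $N$, and since exactness only requires a smooth global primitive (no decay), the forms $\nu=\partial_t\hat\omega(0)$ and $\eta=\partial_t\Imag\hat\Omega(0)$ suffice without any uniform weighted $C^1$-in-$t$ estimate.
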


It turns out that below a certain rate threshold asymptotically conical $G_2$ or Calabi--Yau 6-manifolds can only satisfy the conclusions of the preceding theorems if they are already Euclidean spaces. Therefore, no asymptotically conical $G_2$ manifold with rate less than $-7/2$ can desingularize a nearly $G_2$ conifold in the sense of definition \ref{def:G2desing}. Likewise, no asymptotically conical Calabi--Yau 6-manifold with rate less than $-3$ can desingularize a nearly K\"ahler manifold in the sense of definition \ref{def:NKdesing}.
\begin{mainthm}
  \label{thm:topG2}
  Let $(M,g,\varphi)$ be an asymptotically conical $G_2$ manifold of rate $\nu < -7/2$.

  Then $* \varphi \in \Omega^4(M)$ is exact if and only if $(M, g)$ is isometric to $(\R^7, g_{\mathrm{eucl}})$.

  In particular, if $(M, g)$ is not isometric to $(\R^7, g_{\mathrm{eucl}})$ the cohomology group $H^4(M, \R)$ is non-trivial.
\end{mainthm}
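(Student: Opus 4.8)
The statement is an equivalence, and the implication from Euclidean space is immediate: on $(\R^7,g_{\mathrm{eucl}})$ the coassociative form $\ast\varphi$ is parallel, hence closed, and since $H^4(\R^7;\R)=0$ it is automatically exact. Moreover the last sentence of the theorem is exactly the contrapositive of the reverse implication (if $\ast\varphi$ is closed and $H^4=0$ it would be exact, forcing $M\cong\R^7$), so the entire content is the forward direction: \emph{if $\ast\varphi$ is exact and $\nu<-7/2$, then $(M,g)$ is flat, hence isometric to $(\R^7,g_{\mathrm{eucl}})$.} Since $\ast\varphi$ is closed, ``$\ast\varphi$ exact'' means $[\ast\varphi]=0\in H^4(M;\R)$, so the forward direction is equivalent to the assertion that a \emph{non-flat} $M$ has $[\ast\varphi]\neq 0$. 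I will use throughout that holonomy $G_2$ forces $\Ric\equiv 0$ and $\nabla\varphi\equiv 0$, so that $\varphi$ and $\ast\varphi$ are parallel, in particular harmonic.

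Two preliminary observations fix the strategy. First, the threshold $-7/2=-n/2$ (with $n=7$) is precisely the borderline for square integrability of the deviation from the cone: if $\ast\varphi-\ast\varphi_C=O(r^{\nu})$ on the end, then $\int r^{2\nu+6}\,dr<\infty$ exactly when $\nu<-7/2$, i.e. $\ast\varphi-\ast\varphi_C\in L^2$. This is what the hypothesis buys. Second, on the cone itself $\ast\varphi_C$ is \emph{always} exact, for any link: since $\ast\varphi_C$ is closed and homogeneous of degree $4$ under $r\mapsto\lambda r$, Cartan's formula gives the explicit primitive
\[
\ast\varphi_C=\tfrac14\,d\!\left(\iota_{r\partial_r}\ast\varphi_C\right),
\]
with $\bigl|\iota_{r\partial_r}\ast\varphi_C\bigr|_{g_C}=O(r)$. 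Consequently exactness of $\ast\varphi$ can only fail because of the topology and geometry of the compact core that $M$ adds to the cone; the theorem is really a statement that this core must be trivial. (In the Bryant--Salamon examples this is transparent: the core contains a compact coassociative cycle -- e.g. the zero section $S^4\subset\Lambda^2_-(S^4)$ -- on which the calibration $\ast\varphi$ integrates to a positive volume, so $[\ast\varphi]\neq 0$ directly.)

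The plan for the general forward direction is to detect $[\ast\varphi]$ by Hodge theory on asymptotically conical manifolds. Writing $\ast\varphi=d\beta$ globally, I would first put $\beta$ into the gauge $d^{\ast}\beta=0$; then $\beta$ is a harmonic $3$-form with $d\beta=\ast\varphi$, asymptotic to the cone primitive $\tfrac14\iota_{r\partial_r}\ast\varphi_C$. The deviation $\eta:=\ast\varphi-\ast\varphi_C$ lies in $L^2$ by the rate, so the Hodge/$L^2$ theory of asymptotically conical ends (in the spirit of Lockhart--McOwen and Hausel--Hunsicker--Mazzeo) applies in degree $4>n/2$, where $L^2$ harmonic forms compute the image of $H^4_c(M)\to H^4(M)$. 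The goal is an integration-by-parts identity, built from the primitive $\beta$ and the parallel forms $\varphi,\ast\varphi$ together with the Weitzenböck formula on the Ricci--flat background, whose boundary term at infinity is governed exactly by the rate: under $\nu<-7/2$ that boundary term vanishes, and the interior integrand is sign-definite and controls the full curvature (the Ricci part being already zero). Its vanishing then forces $(M,g)$ to be flat, hence $\R^7$.

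The hard part, and the true heart of the proof, is this last step: producing a sign-definite integral identity and controlling its boundary behaviour at the sharp rate. The difficulty is structural, not merely technical: $\ast\varphi_C$ is defined only near infinity, so $\eta$ is not a genuine global harmonic form, and the obstruction to extending it across the core is precisely the class in $H^4(M;\R)$ that must be shown nonzero. Concretely I expect to solve the gauge-fixing and primitive equations in weighted Sobolev spaces, verify via the indicial-root analysis that the required decay of $\beta-\tfrac14\iota_{r\partial_r}\ast\varphi_C$ holds exactly in the range $\nu<-7/2$, and then check that the resulting boundary pairing is both forced to vanish by global exactness and sign-definite by the $G_2$/Ricci-flat Weitzenböck structure. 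The borderline nature of $-7/2$ -- the $L^2$ threshold -- is what makes the conclusion sharp, and establishing the sign is the step I expect to require the most care.
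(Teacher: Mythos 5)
Your framing is correct as far as it goes: the easy direction, the identification of $\nu<-7/2$ as the $L^2$ threshold, the explicit cone primitive $\frac14\iota_{r\partial_r}*\varphi_C$, and the appeal to $L^2$ Hodge theory in degree $4>n/2$ are all genuine ingredients. But there is a real gap at exactly the point you flag as ``the true heart of the proof'': you never exhibit the sign-definite Weitzenb\"ock-type identity on which your argument rests, and there is no reason to expect one to exist --- on a Ricci-flat manifold the curvature terms in the Weitzenb\"ock formula on $3$- and $4$-forms involve the full Riemann tensor and have no definite sign, and as you yourself observe, the object $\eta=*\varphi-*\varphi_C$ is defined only near infinity and is not globally harmonic, so the boundary pairing you hope to evaluate is precisely the cohomology class you are trying to prove nonzero; as it stands the strategy is circular. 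The paper's mechanism is entirely different and involves no curvature estimate: it constructs an \emph{approximate potential}, a function $u$ with $\Delta_g u=-7$ and $u-\frac12\rho^2\in L^2_{2,2+\nu+\epsilon}$ (theorem \ref{thm:approx_potential}; obtaining this decay beyond the leading term is the technical core, and requires coordinates at infinity in Bianchi gauge). Proposition \ref{prop:dstarwedge} then yields the algebraic identity $(\mathring{\Hess}\,u)_*\varphi=4\varphi-d^*(du\wedge\varphi)$, so that $\kappa=*\bigl((\mathring{\Hess}\,u)_*\varphi\bigr)=4*\varphi-d*(du\wedge\varphi)$ is closed, coclosed, lies in unweighted $L^2$ precisely because $\nu<-7/2$, and is exact if and only if $*\varphi$ is. Lockhart's theorem \ref{thm:hodge_theory} kills $\kappa$; the $G_2$-representation-theoretic injectivity of $h\mapsto h_*\varphi$ on trace-free symmetric $2$-tensors gives $\mathring{\Hess}\,u=0$, hence $\Hess u=g$; and Tashiro's theorem \ref{thm:tashiro} then produces the isometry with $(\R^7,g_{\mathrm{eucl}})$ in one step. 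None of these steps --- the approximate potential with refined asymptotics, the identity converting exactness of $*\varphi$ into exactness of an $L^2$ form, the injectivity of the $\Sym^2_0$-action, or the reduction to Tashiro rigidity --- appears in your proposal, so what you have is a plausible research plan rather than a proof.

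A secondary slip: even if your scheme yielded flatness, ``flat, hence isometric to $\R^7$'' requires an argument, since a complete flat manifold asymptotic to a cone could a priori be a nontrivial quotient or bundle; one would still need to rule out nontrivial fundamental group of the link. The paper's route through $\Hess u=g$ and Tashiro avoids this issue entirely, which is one more indication that the potential-function mechanism, not a curvature identity, is the right engine here.
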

In 1989 Bryant and Salamon constructed the first three asymptotically conical Riemannian manifolds with holonomy group $G_2$, which were also the first complete examples with holonomy group $G_2$. \cite{BS} The underlying manifolds of these Bryant--Salamon spaces are $\Lambda^2_- S^4$, $\Lambda^2_- \bb{CP}^2$ and $S^3 \times \R^4$. The rate of $\Lambda^2_- S^4$ and $\Lambda^2_- \bb{CP}^2$ is $-4$ and to these spaces the theorem applies. On the other hand the metric on $S^3 \times \R^4$ has rate $-3$ and therefore the theorem does not apply, which is also evident from the fact that $H^4(S^3 \times \R^4) = 0$. Recently, Foscolo, Haskins and Nordstr\"om constructed more examples of asymptotically conical $G_2$ manifolds, see \cite{FHN}.

\begin{mainthm}
  \label{thm:topSU3}
  Let $(M, g, \omega, \Omega)$ be an asymptotically conical $\SU(3)$ manifold of rate $\nu < -3$.

  Then $\omega^2 \in \Omega^2(M)$ and $\Real \Omega \in \Omega^3(M)$ are simultaneously exact if and only if $(M,g)$ is isometric to $(\R^6, g_{\mathrm{eucl}})$.

  In particular, if $(M,g)$ is not isometric to $(\R^6, g_{\mathrm{eucl}})$, the cohomology group $H^2(M,\R)$ or $H^3(M, \R)$ is non-trivial.
\end{mainthm}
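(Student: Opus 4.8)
The plan is to prove the nontrivial implication; the converse is immediate, since on $(\R^6, g_{\mathrm{eucl}})$ the Poincaré lemma makes every closed form of positive degree exact, and $\omega^2$, $\Real\Omega$ are closed. I would follow the same scheme as for the $G_2$ statement (Theorem \ref{thm:topG2}): the threshold $\nu < -3 = -\tfrac{n}{2}$ with $n = 6$ is exactly the rate at which a tensor decaying like the metric deviation $h = g - g_C$ becomes $L^2$, so the argument is genuinely one of $L^2$/weighted analysis rather than a bare application of Stokes' theorem.

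First I would record the elementary structure. Since $(M,\omega,\Omega)$ is Calabi--Yau the forms $\omega^2$ and $\Real\Omega$ are parallel, hence harmonic, with constant pointwise norm; in particular they do not decay and are not themselves $L^2$. Writing $\omega^2 = d\beta$ and $\Real\Omega = d\gamma$ and integrating by parts over the sublevel region $D_R = \{ r \le R\}$, using $*\omega^2 = 2\omega$ and $\omega^3 = \tfrac32 \Real\Omega\wedge\Imag\Omega$, yields the two identities
\begin{equation*}
  12\,\vol(D_R) = 2\int_{\{r=R\}} \beta\wedge\omega, \qquad 4\,\vol(D_R) = \int_{\{r=R\}} \gamma\wedge\Imag\Omega .
\end{equation*}
At leading order both boundary integrals merely reproduce the volume-growth coefficient $\vol(L)$ of the asymptotic cone and are therefore tautological. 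The content must consequently lie one order deeper: in whether the global primitives $\beta,\gamma$ can be chosen with asymptotics strictly subordinate to the Euler-field primitives $\tfrac14 \iota_{r\partial_r}\omega^2$ and $\tfrac13 \iota_{r\partial_r}\Real\Omega$ that the cone already supplies. This is a cohomological question governed by the weighted analysis of $d + d^*$.

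The core step is therefore to invoke the Fredholm theory of the Hodge--de Rham operator on asymptotically conical manifolds: $d + d^*$, acting between weighted Hölder or Sobolev spaces, is Fredholm away from a discrete set of critical rates determined by the spectrum of the link Laplacian, and $-\tfrac{n}{2}$ is the rate at which $L^2$ solutions first appear. I would use the exactness of \emph{both} $\omega^2$ and $\Real\Omega$ to conclude that the part of the deviation $h = g - g_C$ pairing nontrivially with these two classes vanishes; since $\nu < -\tfrac n2$ makes $h$ an $L^2$ solution of the linearized Ricci-flat Calabi--Yau system, a Bochner/Weitzenböck vanishing identity then forces $h \equiv 0$ modulo the diffeomorphism gauge. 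Consequently $(M,g)$ is isometric to the metric cone $C(L)$; but a complete smooth Riemannian manifold that is globally a metric cone is flat, which forces $L = S^5$ with the round metric and hence $(M,g)\cong(\R^6,g_{\mathrm{eucl}})$. Equivalently, one may phrase the endgame through volume: the exactness conditions force the asymptotic volume ratio to equal that of $\R^6$, whereupon Bishop--Gromov rigidity for $\Ric \ge 0$ gives flatness.

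The hard part will be exactly this conversion of the two cohomological hypotheses into the vanishing of $h$. Two difficulties stand out. First, $-3$ is a \emph{borderline} $L^2$ rate, so the weighted Fredholm theory must be applied carefully: one has to exclude logarithmic and obstruction terms at the critical weight and make precise what ``exact with controlled primitive'' means there. Second, neither calibration form suffices on its own — the cone $C(L)$ already has $\omega_C^2$ and $\Real\Omega_C$ exact for an arbitrary Sasaki--Einstein link — so the argument must genuinely combine the two exactness statements to pin down the full $\SU(3)$ deformation, and it is precisely this simultaneous use that singles out $\R^6$ among all admissible cones. The final cohomological remark then follows by contraposition: were both $H^2(M,\R)$ and $H^3(M,\R)$ trivial, every closed representative would be exact, in particular $\omega^2$ and $\Real\Omega$, forcing $(M,g)\cong(\R^6,g_{\mathrm{eucl}})$.
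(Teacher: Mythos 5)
Your converse direction and the final contrapositive remark are fine, but the core of the forward direction has a genuine gap, and it sits exactly where you yourself locate ``the hard part.'' The step that converts exactness of $\omega^2$ and $\Real\Omega$ into vanishing of the deviation $h = g - g_C$ is never given a mechanism: $h$ is a symmetric $2$-tensor, and no pairing between the hypothesized exactness (statements about de Rham classes of forms) and $h$ is defined, so the phrase ``the part of $h$ pairing nontrivially with these two classes vanishes'' is not an argument. Moreover, $h$ is the difference of two solutions of a \emph{nonlinear} system, so it satisfies the linearized Ricci-flat/Calabi--Yau equations only up to quadratic error terms; a Bochner--Weitzenb\"ock vanishing theorem for the linearized operator cannot be applied to it directly, and even granting that, there is no known identity that would let the two exactness hypotheses feed into such a vanishing. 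The alternative endgame via Bishop--Gromov is equally unsupported: nothing in your argument shows the asymptotic volume ratio equals that of $\R^6$ --- as you note yourself, $\omega_C^2$ and $\Real\Omega_C$ are exact on \emph{every} Calabi--Yau cone, so no volume information can come out at that level. Finally, your reading of the threshold is slightly off: $\nu < -3$ is not used to put $h$ in $L^2$, and the weight analysis is not a borderline/critical-rate issue for $d+d^*$ on forms.

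The missing idea is the \emph{approximate potential}. The paper's proof (Section \ref{sec:proofs}) takes the function $u$ with $\Delta_g u = -6$ and $u - \tfrac12\rho^2 \in L^2_{2,\lambda}$, $\lambda = 2+\nu+\epsilon$ (Theorem \ref{thm:approx_potential}; this is where $B_{g_C}$-gauged coordinates and the weighted Fredholm theory for the \emph{scalar} Laplacian enter), and forms $\kappa = *(\mathring{\Hess}\,u)_*\omega$ and $\eta = *(\mathring{\Hess}\,u)_*\Real\Omega$. The condition $\nu < -3$ is what places these in unweighted $L^2$. The identity $d^*(du\wedge\kappa) = (d^*du)\kappa - (\Hess u)_*\kappa$ for parallel $\kappa$ (Proposition \ref{prop:dstarwedge}) shows both forms are coclosed and, \emph{using the exactness hypotheses}, exact; Lockhart's $L^2$ Hodge theorem (Theorem \ref{thm:hodge_theory}, degree $\geq n/2 = 3$) then forces $\eta = \kappa = 0$. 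The simultaneous use of both hypotheses that you correctly anticipated appears here representation-theoretically: $h \mapsto h_*\omega$ is injective on $\Sym^2_8$ and $h \mapsto h_*\Real\Omega$ on $\Sym^2_{12}$, so the two vanishings together kill all of $\mathring{\Hess}\,u$, giving $\Hess u = g$, and Tashiro's theorem \ref{thm:tashiro} yields $(M,g) \cong (\R^6, g_{\mathrm{eucl}})$. Without some substitute for this chain --- a concrete $L^2$ object built from the hypotheses whose vanishing implies a rigidity statement --- your outline cannot be completed as written.
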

There exists an extensive literature for asymptotically conical Calabi--Yau manifolds, which includes existence results, see for example \cite{CH} and references included there. We just mention three cohomogeneity one examples: the Stenzel metric on $T^* S^3$, which has rate $-3$, the Candelas-DeLaOssa metric on the total space of the vector bundle $\mc{O}(-1) \oplus \mc{O}(-1) \to \bb{CP}^1$, which has rate $-2$, and the Calabi metric on the total space of the line bundle $\mc{O}(-3) \to \bb{CP}^2$, which has rate $-6$. The theorem only applies to the Calabi metric. It fails also for topological reasons in the case of the Candelas-DeLaOssa metric, since the third and fourth cohomology groups vanish for the underlying manifold. The topology of $T^* S^3$ allows the theorem to hold, but the rate $-3$ of the Stenzel metric lies just beyond the range allowed by the theorem. It is an interesting question whether a refinement of the proof would extend to the Stenzel metric. It seems that this requires a finer investigation of the leading term of the metric in appropiate coordinates.

Let us mention that the rate of asymptotically conical Ricci flat manifolds has been studied in \cite{CT} and can be estimated using analytical properties of the link, see the recent preprint \cite{KS}. In particular, asymptotically locally Euclidean Ricci flat spaces of dimension $n$ have rate $\leq -n$.

The proofs of theorems \ref{thm:topG2} and \ref{thm:topSU3} can be found in section \ref{sec:proofs}. Both proofs are very similar and they rely on a study of approximate potential functions. These are functions on asymptotically conical manifolds, which are asymptotic to $r^2/2$ and whose Laplacian is constant. Such an approximate potential can be used to define certain forms using the forms that are given by the $G_2$ or $\SU(3)$ structure. Appealing to a Hodge theoretical result shows that these forms must in fact vanish. Application of this result first requires a closer examination of the asymptotics of the approximate potential beyond the leading term. This is the technical core of the theorem and this is also where the condition on the rate comes in, see also theorem \ref{thm:approx_potential}. Representation theoretic properties of $G_2$ and $\SU(3)$ structures then imply that the vanishing of the forms implies that the trace free part of the Hessian of the approximate potential vanishes -- a theorem of Tashiro will then yield the desired statement, i.e.\@ that the asymptotically conical manifold must be isometric to a Euclidean space.  Approximate potentials have been used earlier in \cite{BH} to define an invariant of ALE gravitational instantons, which vanishes if and only if the instanton is Euclidean $\R^4$.

Theorems \ref{thm:G2obstruction} and \ref{thm:SU3obstruction} assume that there are smooth curves desingularizing the nearly $G_2$ or nearly K\"ahler manifolds. These theorems can also be interpreted as a statement about the moduli spaces of nearly $G_2$ manifolds and nearly Kähler manifolds. They indicate that nearly $G_2$ conifolds and nearly Kähler conifolds should not be thought of as boundary points of the moduli space of smooth nearly $G_2$ or nearly Kähler manifolds, except possibly in exceptional circumstances. This may be contrasted with the work of Karigiannis and Lotay \cite{KL}, who argue that in the torsion free setting the dominant part of the boundary of the moduli space of $G_2$ structures on a closed manifold should be given by $G_2$ conifolds.

Examples of Einstein desingularizations of sine cones due to B\"ohm \cite{Boehm} suggest that it can also happen that there is only a discrete sequence approaching the singular metric rather than a smooth curve. By analogy with recent results of Ozuch \cite{Ozuch1, Ozuch2, Ozuch4}, we believe that the obstructions we found also exclude such desingularizations and we therefore propose the following two conjectures.
\begin{conj}
  \label{conj:G2}
  Let $(\overline{X}, \varphi_{CS})$ be a nearly $G_2$ conifold with a singularity at $x_0$ modelled on the $G_2$ cone $(C = C(L), \varphi_C)$.

  If every asymptotically conical $G_2$ manifold asymptotic to $(C, \varphi_C)$ has rate $\nu < -7/2$, then there is no smooth nearly $G_2$ manifold Gromov--Hausdorff close to $(\overline{X}, \omega_{CS}, \rho_{CS})$.
\end{conj}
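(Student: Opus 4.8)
The plan is to argue by contradiction along the bubbling philosophy underlying Ozuch's desingularization results \cite{Ozuch1, Ozuch2, Ozuch4}, feeding the outcome into Theorems \ref{thm:G2obstruction} and \ref{thm:topG2}. Suppose, contrary to the conjecture, that there is a sequence of smooth nearly $G_2$ manifolds $(M_i, \varphi_i)$ converging to $(\overline{X}, \varphi_{CS})$ in the Gromov--Hausdorff sense. Nearly $G_2$ manifolds are Einstein, and after normalizing $\varphi_i$ we may take the Einstein constant to be a fixed positive value; hence the $M_i$ have uniformly bounded diameter, volume and Ricci curvature, and the convergence to the fixed limit supplies a uniform lower volume bound. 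Einstein regularity theory then upgrades the convergence to smooth convergence away from $x_0$, while forcing the curvature of $\varphi_i$ to concentrate near $x_0$, since the $G_2$ cone $(C, \varphi_C)$ has curvature $\sim r^{-2}$ and $(\overline{X}, \varphi_{CS})$ carries a genuine conical singularity there.

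The analytic heart is a blow-up at $x_0$. Choosing scales $t_i \to 0$ adapted to the curvature concentration, I would examine the rescaled structures $(M_i, t_i^{-3}\varphi_i)$, whose underlying metrics are $t_i^{-2} g_i$. Under this rescaling the Einstein constant is multiplied by $t_i^2 \to 0$, and the nearly parallel equation $d\varphi_i = \lambda *_i \varphi_i$ degenerates: since $d *_i \varphi_i = 0$ holds for every nearly parallel $G_2$ structure, the limit should be a torsion-free $G_2$ structure, that is, a complete Ricci-flat $G_2$ manifold $(N, \varphi_{AC})$, asymptotically conical with asymptotic cone $(C, \varphi_C)$ -- exactly the type of object appearing in Theorem \ref{thm:G2obstruction}. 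Showing that this bubble exists, is smooth, is \emph{non-flat}, is asymptotic to the prescribed cone, and captures the concentrating curvature with no loss into deeper or nested singularities, is the core difficulty.

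Granting such a bubble, the next step is to establish the discrete analogue of Theorem \ref{thm:G2obstruction}, namely that $*\varphi_{AC}$ is exact on $N$. On each closed $M_i$ the $4$-form $*_i \varphi_i = \lambda^{-1} d\varphi_i$ is exact, but the naive primitive scales like $(t_i \lambda)^{-1}\varphi_i$ and diverges under the blow-up, so exactness in the limit is \emph{not} automatic and must instead be extracted from the topology of the neck joining the bubble to the bulk -- precisely the cohomological mechanism behind Theorem \ref{thm:G2obstruction}. Once $*\varphi_{AC}$ is known to be exact, the hypothesis that every asymptotically conical $G_2$ manifold asymptotic to $(C, \varphi_C)$ has rate $\nu < -7/2$ lets Theorem \ref{thm:topG2} apply, forcing $(N, g_N)$ to be isometric to $(\R^7, g_{\mathrm{eucl}})$. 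Its asymptotic cone is then flat, so the link is the round $S^6$ and $C$ is smooth, contradicting the assumption that $x_0$ is a genuine conical singularity of $(\overline{X}, \varphi_{CS})$. This contradiction would establish the conjecture.

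The principal obstacle is the bubbling analysis of the previous two paragraphs. It requires an $\varepsilon$-regularity theorem for nearly parallel $G_2$ structures, control of the number and depth of bubbles in dimension seven, a classification of admissible tangent cones ensuring that the bubble is modelled on the given cone rather than some other Ricci-flat cone, and a quantitative neck estimate transporting the exactness of $*_i \varphi_i$ to $*\varphi_{AC}$ despite the divergence of the naive primitive. These are exactly the ingredients that the four-dimensional arguments of \cite{Ozuch1, Ozuch2, Ozuch4} supply in the Einstein setting; extending them to the higher-dimensional nearly parallel $G_2$ equation, perhaps in combination with the gluing and moduli techniques of \cite{FHN}, is where the real work lies.
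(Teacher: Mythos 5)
The statement you are addressing is Conjecture \ref{conj:G2}: the paper does not prove it, and explicitly flags that ``there are formidable analytical challenges to transferring Ozuch's approach to the weak holonomy setting.'' Your proposal is therefore not being measured against a proof in the paper but against the paper's own heuristic, and on that score it is a faithful and well-organized rendering of the intended program: contradiction via a bubbling sequence, extraction of an AC $G_2$ bubble modelled on $(C,\varphi_C)$, a discrete analogue of Theorem \ref{thm:G2obstruction} giving exactness of $*\varphi_{AC}$, then Theorem \ref{thm:topG2} and the rate hypothesis forcing the bubble to be $\R^7$ and the link to be the round $S^6$, contradicting the genuine singularity at $x_0$. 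You also correctly spot the central pitfall, namely that on each closed $M_i$ the primitive $\lambda^{-1}\varphi_i$ of $*_i\varphi_i$ diverges under the blow-up, so exactness of the limit form is not automatic. But a proposal that names its gaps is still a proposal, not a proof, and here the gaps are the entire theorem. In the paper, exactness of $*\varphi_{AC}$ is obtained by differentiating the smooth family $\hat\varphi(t)$ at $t=0$ (the obstruction equation $d\eta = 4\Theta(\varphi_{AC})$); a Gromov--Hausdorff convergent \emph{sequence} admits no such derivative, and you offer no substitute mechanism --- no neck estimate, no cohomological argument on the annular region, nothing that would produce an $L^2$ or polynomially decaying primitive on $N$. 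This is precisely the step Ozuch's four-dimensional work \cite{Ozuch1,Ozuch2,Ozuch4} accomplishes through delicate weighted analysis, and it has no current analogue for the nearly parallel $G_2$ equation.

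Two further points would need repair even within your outline. First, the bubble extraction assumes more than Einstein theory currently delivers in dimension seven: a Gromov--Hausdorff limit of rescalings is a priori only a Ricci-flat metric space, and upgrading it to a \emph{smooth} AC $G_2$ manifold with polynomial rate asymptotic to the \emph{given} cone $(C,\varphi_C)$ requires ruling out nested or deeper singularities, exotic intermediate tangent cones, and non-polynomial convergence --- note the hypothesis of the conjecture only constrains smooth AC $G_2$ manifolds asymptotic to $C$, so a singular bubble or one with a different asymptotic cone would escape it entirely. Second, the conjecture is a quantitative statement (``no smooth nearly $G_2$ manifold Gromov--Hausdorff close''), so the contradiction argument must begin by fixing the $\epsilon$-neighbourhood and showing a sequence converging to $(\overline{X},\varphi_{CS})$ exists if the conjecture fails; this is routine but should be said. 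In sum: your route is the right one and matches the paper's stated expectations, but the statement remains a conjecture after your write-up, because the $\epsilon$-regularity, bubble-tree, and neck-transfer ingredients you correctly identify as ``where the real work lies'' are exactly the content that would constitute a proof.
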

\begin{conj}
  \label{conj:NK}
  Let $(\overline{X}, \omega_{CS}, \rho_{CS})$ be a nearly K\"ahler conifold with a singularity at $x_0$ modelled on the Calabi--Yau cone $(C = C(L), \omega_C, \rho_C)$.

  If every asymptotically conical Calabi--Yau manifold asymptotic to $(C, \varphi_C)$ has rate $\nu < -3$, then there is no smooth nearly K\"ahler manifold Gromov--Hausdorff close to $(\overline{X}, \omega_{CS}, \rho_{CS})$.
\end{conj}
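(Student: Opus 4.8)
The plan is to argue by contradiction and reduce the discrete (Gromov--Hausdorff) case to the rigidity statement of Theorem \ref{thm:topSU3} by a bubbling analysis, in the spirit of Ozuch's desingularization obstructions. Suppose there were a sequence of \emph{smooth} nearly Kähler $6$-manifolds $(M_i, g_i, \omega_i, \Omega_i)$ converging in the Gromov--Hausdorff sense to $(\overline{X}, \omega_{CS}, \rho_{CS})$, where $\rho_i = \Real\Omega_i$. Nearly Kähler manifolds are Einstein with a fixed positive Einstein constant, so the convergence is non-collapsed and the singularity at $x_0$ is produced by curvature concentration. Rescaling $g_i$ by the inverse curvature scale $\lambda_i \to \infty$ at $x_0$ and passing to a limit (using Cheeger--Colding--Tian regularity together with uniqueness of the tangent cone $C$ at the conical singularity) should extract a complete limit $(N, g_N)$. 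Because the Einstein constant is fixed, the rescaled constant tends to $0$, so $N$ is Ricci flat; since the $\SU(3)$ torsion scales the same way, the rescaled structures $(\lambda_i^2\omega_i, \lambda_i^3\Omega_i)$ converge in $C^\infty_{\loc}$ to a torsion-free structure $(\omega_N, \Omega_N)$. Thus $N$ is a Calabi--Yau $6$-manifold, and by construction it is asymptotically conical with asymptotic cone $(C, \omega_C, \rho_C)$.

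Next I would transport the cohomological constraint to the bubble. On every $M_i$ the nearly Kähler structure equations $d\omega_i = 3\,\Real\Omega_i$ and $d\,\Imag\Omega_i = -2\,\omega_i^2$ exhibit $\Real\Omega_i$ and $\omega_i^2$ as globally exact forms, with canonical primitives $\omega_i$ and $-\tfrac12\Imag\Omega_i$; hence $[\omega_i^2] = 0$ and $[\Real\Omega_i] = 0$ in $H^*(M_i;\R)$. The goal is to show $\omega_N^2$ and $\Real\Omega_N$ are exact on $N$ as well. The subtlety is that these primitives degenerate under the rescaling: writing the rescaled relation as $d(\lambda_i^3\,\Imag\Omega_i) = -2\lambda_i^{-1}(\lambda_i^2\omega_i)^2$, one sees that the leading part of the rescaled primitive converges to a \emph{closed} form $\Imag\Omega_N$ on $N$ (consistent with $N$ being Calabi--Yau), so that the exactness of $\omega_N^2$ must instead be read off from the \emph{subleading} term at the bubble scale.

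Concretely, I expect an expansion $\lambda_i^3\,\Imag\Omega_i = \Imag\Omega_N + \lambda_i^{-1}\eta_i + o(\lambda_i^{-1})$ on compact subsets, where $\Imag\Omega_N$ is closed and the correction satisfies $d\eta_i \to -2\,\omega_N^2$; controlling $\eta_i$ up to infinity of the bubble then produces a primitive of $\omega_N^2$ on all of $N$ with sufficient decay, and symmetrically for $\Real\Omega_N$ using $\omega_i$. Establishing this subleading convergence and the decay of the limiting primitive through the neck joining the bubble scale to the conifold scale --- equivalently, that the vanishing classes $[\omega_i^2]$ and $[\Real\Omega_i]$ are not lost in the Gromov--Hausdorff limit --- is the technical heart of the argument, and is exactly where the Ozuch-type obstruction machinery (a cohomological pairing of the bubble that is constrained by the converging smooth manifolds) must be adapted from the Einstein/ALE setting to the asymptotically conical Calabi--Yau setting. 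This is the step I expect to be the main obstacle.

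Granting the exactness of $\omega_N^2$ and $\Real\Omega_N$, the argument closes quickly. By hypothesis every asymptotically conical Calabi--Yau manifold asymptotic to $(C, \omega_C, \rho_C)$ has rate $\nu < -3$, so in particular the bubble $N$ does. Theorem \ref{thm:topSU3} then forces $(N, g_N)$ to be isometric to $(\R^6, g_{\mathrm{eucl}})$. But $\R^6$ is asymptotic to the flat cone $C(S^5)$, whereas $N$ is asymptotic to the nontrivial cone $C$ modelling the genuine conical singularity of $\overline{X}$; equivalently, a flat bubble carries no curvature concentration and cannot resolve the singularity at $x_0$. This contradiction shows that no such sequence $(M_i)$ exists, which is the assertion of the conjecture.
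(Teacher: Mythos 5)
You have not proved this statement, and neither does the paper: the statement is Conjecture \ref{conj:NK}, which the paper explicitly leaves open, remarking that ``there are formidable analytical challenges to transferring Ozuch's approach to the weak holonomy setting.'' What the paper actually proves is the strictly weaker pair of results: Theorem \ref{thm:SU3obstruction}, where a \emph{smooth} one-parameter desingularization in the sense of Definition \ref{def:NKdesing} is assumed, and there the exactness of $\omega_{AC}^2$ and $\Real\Omega_{AC}$ is obtained by differentiating the rescaled structure equations $d\hat\omega(t) = -3t\,\Real\hat\Omega(t)$, $d\,\Imag\hat\Omega(t) = 2t\,\hat\omega(t)^2$ in $t$ at $t=0$; combined with Theorem \ref{thm:topSU3} this rules out smooth families, not Gromov--Hausdorff approximation by a discrete sequence. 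Your sketch correctly identifies the intended Ozuch-style strategy and your endgame (rate hypothesis plus Theorem \ref{thm:topSU3} forces a flat bubble, which cannot resolve a genuine conical singularity) is sound, but the middle of the argument is missing, and it is the whole content of the conjecture.

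The gap is precisely the step you flag, and your proposed fix begs the question: the expansion $\lambda_i^3\,\Imag\Omega_i = \Imag\Omega_N + \lambda_i^{-1}\eta_i + o(\lambda_i^{-1})$ with $d\eta_i \to -2\,\omega_N^2$ is exactly a first-order differentiable dependence on the scale parameter, i.e.\ it posits the smooth desingularization structure of Definition \ref{def:NKdesing} that the conjecture is designed to dispense with. For a discrete sequence there is nothing to differentiate, and --- as you yourself observe --- the vanishing of $[\omega_i^2]$ and $[\Real\Omega_i]$ on the closed manifolds $M_i$ imposes no direct constraint on $[\omega_N^2] \in H^4(N;\R)$ or $[\Real\Omega_N] \in H^3(N;\R)$, because the canonical primitives rescale to \emph{closed} forms on the bubble. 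Ozuch's replacement in the Einstein setting \cite{Ozuch1,Ozuch2,Ozuch4} is a full gluing-and-obstruction analysis in gauge, with weighted estimates across neck regions, and adapting it here would additionally require: (i) ruling out or handling bubble trees --- your deepest blow-up limit need not be asymptotic to the given cone $C$, so the rate hypothesis cannot be applied at each scale without an analysis of intermediate cones and necks; (ii) showing the blow-up limit is genuinely asymptotically conical with a polynomial rate in the sense of the paper's definitions (having a tangent cone at infinity is weaker; cf.\ \cite{CT,KS}); and (iii) convergence of the $\SU(3)$ structures, not merely of the Einstein metrics. None of these is addressed, so what you have is a plausible research program whose ``main obstacle'' coincides with the open problem itself. (Minor point: your structure equations $d\omega_i = 3\,\Real\Omega_i$, $d\,\Imag\Omega_i = -2\,\omega_i^2$ differ in sign from the paper's conventions; this is harmless.)
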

There are formidable analytical challenges to transferring Ozuch's approach to the weak holonomy setting.

It should be noted that the theorems and the conjectures are statements only about metrics near the conifold metrics. Indeed, the only two known inhomogeneous examples of nearly Kähler metrics, constructed by Foscolo and Haskins in \cite{FH}, \textit{do} arise as desingularizations of sine cones in a certain sense, but they do not come in families that converge back to the sine cone. Indeed, Foscolo and Haskins conjecture that the examples they construct are the only cohomogeneity one nearly Kähler metrics. From the point of view of the present article, this means that the construction should only work for one specific size of the exceptional divisor of the asymptotically conical Calabi--Yau manifold.

Previously, similar questions have been studied in the context of Einstein manifolds. Biquard investigated the possibility of desingularizing $\Z_2$ orbifold singularities in 4-manifolds in the article \cite{Biquard1} by gluing in an Eguchi--Hanson space and more general isolated orbifold singularities on 4-manifolds in \cite{Biquard2}. Biquard found a first obstruction in terms of the curvature tensor at the orbifold singularity in \cite{Biquard1} and secondary obstructions in \cite{Biquard2}. Morteza and Viaclovsky extended this analysis to $2n$-dimensional Einstein manifolds with $\Z_n$-orbifold singularities. Ozuch deepened the understanding of the Biquard obstructions in the papers \cite{Ozuch1,Ozuch2}, investigated higher order obstructions in \cite{Ozuch3} and gave a proof that certain Einstein orbifolds never appear as Gromov--Hausdorff limits of smooth Einstein manifolds in \cite{Ozuch4}, bypassing certain technical restrictions, which were present in the earlier works on the question.

In the context of special holonomy we note the works of Y.\@-M.\ Chan \cite{Chan1,Chan2}, in which (real) 6-dimensional Calabi--Yau spaces with isolated conical singularities are resolved by asymptotically conical Calabi--Yau manifolds. Similarly to our case there is a dichotomy depending on the rate of the asymptotically conical Calabi--Yau manifold; if the rate is smaller than $-3$, the problem is unobstructed. If the rate is exactly $-3$ an obstruction can be identified, which depends on a cohomology class on the link of the cone. Perhaps the article most similar to ours in subject matter is Karigiannis' \cite{Karigiannis}, in which $G_2$ conifolds are desingularized by gluing in asymptotically conical $G_2$ manifolds. In contrast to our situation, this is possible if a topological condition is verified or else the rate of the asymptotically conical $G_2$ manifolds that are glued in is less than $-4$.

\section*{Acknowledgements}
The author wishes to thank M.\ Freibert and H.\ Wei\ss\@ for many inspiring discussions, which led the author to consider the problems addressed in this article.

\section{$\SU(3)$ and $G_2$ geometry}
\label{sec:SU3G2geometry}
The use of stable forms is a convenient approach to the description of $\SU(3)$ and $G_2$ structures with special properties, which originates in \cite{Hitchin}. Let $V$ be a real, $n$-dimensional vector space. A k-form $\omega \in \Lambda^k V^*$ is \textit{stable}, if its $\GL(V)$ orbit in $\Lambda^k V^*$ is open.
The stabilizer of $\omega$ is the subgroup $\Stab_\omega = \{A \in \GL(V) : A^* \omega = \omega\}$.

Three cases are of interest to us:
\begin{enumerate}
\item If $n = 7$ and $k=3$ or $k=4$, then the stabilizer of a stable form is isomorphic to $G_2$. 
\item If $n=6$ and $k =2$ or $k=4$, then the stabilizer of a stable form is isomorphic to $\Sp(6, \R)$.
\item If $n=6$ and $k=3$, then the stabilizer of a stable form is isomorphic to either \mbox{$\SL(3,\R) \times \SL(3, \R)$} or $\SL(3,\C)$.
\end{enumerate}
We denote by $\Lambda^k_+ V^*$ the set of all stable forms for $n=7$, $k=3,4$ and for $n=6, k=2,4$. In the last case, $n = 6$ and $k = 3$, $\Lambda^3_+ V^*$ is the set of all stable forms with stabilizer isomorphic to $\SL(3,\C)$.

Each of the groups $G_2$, $\Sp(6, \R)$ and $\SL(3,\C)$ preserves a volume form in $\Lambda^n V^*$. Therefore, in these cases we can define a $\GL(V)$-invariant, $n/k$-homogeneous map $\phi = \phi_k^n : \Lambda^k_+ V^* \to \Lambda^n V^*$. Given $\omega \in \Lambda^k_+ V^*$, the derivative $d\phi[\omega]$ can be expressed by an element $\hat \omega \in \Lambda^{n-k} V^*$ via the identity $d \phi[\omega] \nu = \hat\omega \wedge \nu$. This defines the \textit{Hitchin duality map}
$$\Phi = \Phi_k^n : \Lambda^k V^* \to \Lambda^{n-k} V^*, \qquad \Phi(\omega) = \hat\omega.$$
In case $\dim V = 7$ we will denote $\Theta = \Phi_3^7$. For representation theoretic aspects of stable forms in the 7 and 6 dimensional settings we refer to \cite{Bryant} and \cite{Foscolo} respectively. The interaction between the representation theory of the 6 and 7 dimensional cases, which is needed to understand $G_2$ cones, is well explained in \cite{CS} and \cite{WW}.

\subsection{Torsion free and nearly $G_2$ forms}
Let $V$ be a $7$-dimensional real vector space and suppose $\varphi \in \Lambda^3_+ V^*$. The form $\varphi$ induces a metric $g_\varphi$ on $V$ and a Hodge dual $*_{g_\varphi}$ on $\Lambda^{*} V^*$. This can be used to describe the Hitchin dual: $\Theta(\varphi) = *_{g_\varphi} \varphi$.

Let $M$ be a 7-manifold. The fiber bundle of stable 3-forms on $M$ is given by
$$\Lambda^3_+ T^* M = \bigcup_{x\in M} \Lambda^3_+ T^*_x M$$
and the space of its sections will be denoted by $\Omega^3_+(M)$. The constructions described on a vector space in the previous paragraphs have their analogues on the manifold: an element $\varphi$ of $\Omega^3_+(M)$ induces a Riemannian metric $g_\varphi$, a volume form $\vol_\varphi$, the Hodge star $*_{g_\varphi}$. The map $\Theta$ induces a bundle map $\Lambda^3_+ T^* M \to \Lambda^4 T^*M$.
\begin{defn}
  A section $\varphi \in \Omega^3_+ (M)$ is called a \textit{$G_2$ form} on $M$. A $G_2$ form $\varphi$ is called \textit{torsion free}, if
  \begin{equation}
    \label{eq:torsionfree}
    d \varphi = 0, \qquad d\Theta(\varphi) = 0.
  \end{equation}
  A $G_2$ form $\varphi$ is called \textit{nearly parallel $G_2$ at scale $\lambda > 0$}, if
  \begin{equation}
    \label{eq:nearlypar}
    d \varphi = 4 \lambda \Theta(\varphi).
  \end{equation}
  It is called a \textit{nearly parallel $G_2$ form} if \ref{eq:nearlypar} is satisfied with $\lambda=1$.

  A manifold $M$ equipped with a torsion free $G_2$ form $\varphi$ is called a \textit{$G_2$ manifold} and is denoted by $(M, g, \varphi)$, where $g = g_\varphi$.

  A manifold $M$ equipped with a nearly parallel $G_2$ form is called a \textit{nearly $G_2$ manifold} and is denoted by $(M, g, \varphi)$, where $g=g_\varphi$.
\end{defn}
The significance of these conditions is that if $\varphi$ is a torsion free $G_2$ form, then the reduced Riemannian holonomy group of $g_\varphi$ is isomorphic to $G_2$ and if $\varphi$ is a nearly parallel $G_2$ form, then the holonomy group of its Riemannian cone is $\Spin(7)$.

\subsection{Calabi--Yau and nearly K\"ahler 6-manifolds}
Observing that $\SU(3) = \Sp(6,\R) \cap \SL(3,\C)$ allows us to parametrize a $\SU(3)$ structure on a six dimensional vector space $V$ by a stable forms $\omega \in \Lambda^2_+$ and $\Real \Omega \in \Lambda^3_+$. To ensure that the stabilizers of these forms intersect the same way as $\Sp(6,\R)$ and $\SL(3,\C)$ we impose the compatability constraints 
\begin{equation}
  \label{eq:compatibility}
  \omega \wedge \Real \Omega = 0, \qquad \frac 1 4 \Real \Omega \wedge \Phi(\Real \Omega) = \frac 1 6 \omega^3.
\end{equation}
The joint stabilizer of the pair $(\omega,\Real \Omega)$ is then isomorphic to $\SU(3)$ or $\SU(2,1)$. In addition to the compatability constraints we will assume that the joint stabilizer is isomorphic to $\SU(3)$. This is an open condition. Such a pair $(\omega,\Real \Omega)$ then induces a Riemannian metric and an almost complex structure on $V$. The complex 3-form $\Real \Omega + i \Phi(\Real \Omega)$ is a complex volume form and therefore we denote $\Imag \Omega = \Phi(\Real \Omega)$.

We will describe a $\SU(3)$ structure on $V$ by a pair $(\omega, \Omega)$, where $\omega$ is a stable 2-form and $\Omega$ is a complex 3-form, such that $\Real \Omega$ is stable and $\Imag \Omega = \Phi(\Real \Omega)$. We will denote the associated metric by $g_{\omega, \Omega}$ and the associated almost complex structure by $J_{\omega, \Omega}$.

The duality map on 2-forms can be described explicitly as $\Phi(\omega) = \frac 1 2 \omega^2$.

On a $6$-manifold $M$ one may define the bundles of stable 2-forms $\Lambda^2_+ T^*M = \bigcup_{x\in M} \Lambda^2_+ T^*_x M$ and analogously $\Lambda^3_+ T^*M$.

\begin{defn}
  A $\SU(3)$ structure on $M$ is then given by a pair $(\omega, \Omega)$, where $\omega \in \Omega^2_+(M)$ and $\Omega \in \Gamma(\Lambda^3_\C T^*M)$, such that $\Real \Omega \in \Omega^3_+(M)$, $\Imag \Omega = \Phi(\Real\Omega)$ and
  \[
  \omega \wedge \Real \Omega = 0, \qquad \frac 1 4 \Real \Omega \wedge \Imag \Omega = \frac 1 6 \omega^3.
  \]
\end{defn}

\begin{defn}
  A $\SU(3)$ structure $(\omega, \Omega)$ is \textit{torsion free} or \textit{Calabi--Yau}, if
  $$d\omega = 0, \qquad d\Real \Omega = 0, \qquad d \Imag \Omega = 0.$$
  A $\SU(3)$ structure $(\omega,\Omega)$ is \textit{nearly K\"ahler} at scale $\lambda > 0$, if
  $$d \omega = -3 \lambda \Real \Omega, \qquad d\Imag \Omega = 2 \lambda \omega^2.$$

  A 6-manifold $M$ equipped with a torsion free $\SU(3)$ structure $(\omega, \Omega)$ is called a \textit{Calabi--Yau 6-manifold} and is denoted by $(M,g,\omega, \Omega)$, where $g$ denotes the associated Riemannian metric.

  A 6-manifold $M$ equipped with a nearly Kähler $\SU(3)$ structure $(\omega, \Omega)$ of scale $1$ is called a \textit{nearly Kähler 6-manifold} and is denoted by $(M,g,\omega, \Omega)$, where $g$ denotes the associated Riemannian metric.
\end{defn}

\section{Cones, conifolds and asymptotically conical manifolds}
\label{sec:cones}
This section gives definitions of the principal objects of this article: asymptotically conical and conically singular manifolds.
\begin{defn}
  Let $L$ be a closed manifold. The \textit{cone} over $L$ is denoted by $C(L)$ and is the product manifold $(0, \infty) \times L$. The manifold $L$ is called the \textit{link} of the cone.
\end{defn}
Points in $C(L)$ are typically denoted by $(r,x)$ and $r$ can be considered as the canonical radial coordinate function. In this section, $L$ will always denote a closed manifold.
\begin{defn}
A \textit{dilation} by the factor $\lambda > 0$ on a cone $C(L)$ is the diffeomorphism
$$\dil_\lambda : C(L) \to C(L), \qquad \dil_\lambda(r,x) = (\lambda r, x).$$
  The \textit{dilation vector field} on a cone $C(L)$ is $r \partial_r$.
\end{defn}
The dilation vector field generates the dilations.

\begin{defn}
  Let $L$ be a closed manifold. A form $\kappa \in \Omega^k(C(L))$ is \textit{homogeneous of rate $\lambda$} if
  $$\mc{L}_{r\partial_r} \kappa = (k + \lambda) \kappa.$$
  Similarly, a symmetric 2-form $h \in \Gamma(\Sym^2 T^* C(L))$ is \textit{homogeneous of rate $\lambda$} if $\mc{L}_{r\partial_r} h = (2 + \lambda) h$.
\end{defn}

\begin{defn}
  A \textit{Riemannian cone} is a cone $C = C(L)$ together with a Riemannian metric $g_C$ of the form $dr^2 + r^2 g_L$, where $g_L$ is a Riemannian metric on $L$.
\end{defn}
Note that the metric $g_C$ is always homogeneous of rate $0$, but a homogeneous Riemannian metric of rate $0$ need not be of this form.

The normalization of the rates of forms and tensors is explained by the following observation. If $\kappa \in \Omega^k(C(L))$ is homogeneous of rate $\lambda$, then $|\kappa|_{g_C}$ is a homogeneous function of rate $\lambda$. In the same manner, if $h \in \Gamma(\Sym^2 T^* C(L))$ is homogeneous of rate $\lambda$, then $|h|_{g_C}$ is homogeneous of rate $\lambda$.

\begin{defn}
  Let $(L, g_L)$ be a closed Riemannian manifold. A complete Riemannian manifold $(M,g)$ is \textit{asymptotically conical} with \textit{cone at infinity} $(C, g_C)$ and \textit{rate} $\nu < 0$, if there exists a compact set $K \subset M$ and a diffeomorphism $\Phi : (R, \infty) \times L \to M \bs K$, such that for every $k$ there exists $C_k > 0$, such that
  $$r^k |\nabla^k (\Phi^* g - g_C)| \leq C_k r^\nu$$
  holds on $(R, \infty) \times L$. Here, $\nabla$ is the Levi--Civita connection of $g_C$ and $|\cdot|$ refers to the metric $g_C$.

  A \textit{radial function} on $(M,g)$ is a smooth function $\rho : M \to \R$, such that $\rho \equiv 1$ on $K$, $\rho \geq 1$ on $M$ and $\Phi^* \rho = r$ on $(R_0, \infty) \times L$ for some large enough $R_0$.
\end{defn}

\begin{defn}
Let $L$ be a closed manifold of dimension $6$ with a nearly K\"ahler structure $(\omega, \Omega)$. The \textit{$G_2$ cone} over $L$ is the manifold $C = (0,\infty) \times L$ with the $G_2$ form
$$\varphi_C = r^3 \Real \Omega - r^2 dr \wedge \omega.$$
\end{defn}
If $(C, \varphi_C)$ is a $G_2$ cone, then the dual of $\varphi_C$ is given by
$$\psi_C = \Theta(\varphi_C) = -r^3 dr \wedge \Imag \Omega - \frac 1 2 r^4 \omega^2.$$
The associated metric $g_\varphi$ is $dr^2 + r^2 g_{\omega,\Omega}$.

\begin{defn}
\label{def:ACG2}
Let $(C,\varphi_C)$ be a $G_2$ cone with link $L$. A $G_2$ structure $\varphi$ on a manifold $M$ is an \textit{asymptotically conical (AC) $G_2$ structure} asymptotic to the cone $(C, \varphi_C)$ at \textit{rate} $\nu < 0$, if there exists a compact set $K \subset M$ and a diffeomorphism $\Psi : (R, \infty) \times L \to M \bs K$, such that there exists for every $k \in \N_0$ a $C_k > 0$, for which the following inequality holds on $(R,\infty) \times L$:
$$r^k |\nabla^k_C (\Psi^* \varphi - \varphi_C)|_{g_C} \leq C_k r^{\nu}.$$

A pair $(M, \varphi)$ is called an \textit{asymptotically conical $G_2$ manifold}, if $\varphi$ is a torsion free asymptotically conical $G_2$ structure on $M$.
\end{defn}
For the sake of brevity, ``asymptotically conical'' will usually be abbreviated by AC. It is easy to generalize this definition to multiple ends, but connected AC $G_2$ manifolds automatically have only one end. An asymptotically conical $G_2$ manifold is automatically an asymptotically conical Riemannian manifold with the same rate.

In the case of conically singular manifolds, we do admit multiple conical singularities and each singularity may be modelled on a different $G_2$ cone.
\begin{defn}
\label{def:CSG2}
Let $\overline{M}$ be a topological space, $\Sigma = \{x_1, \ldots, x_n\} \subset \overline{M}$ and suppose that $M = \overline{M} \bs \Sigma$ is a smooth 7-dimensional manifold.

A \textit{conically singular $G_2$ structure} on $\overline{M}$ with singular set $\Sigma$ and cones $(C_i, \varphi_{C_i})$ of rate $\nu_i > 0$ at $x_i$, $1 \leq i \leq n$, is a $G_2$ form $\varphi$ defined on $M$, such that:
\begin{enumerate}
\item there exists a compact set $K \subset M$ and open sets $S_1, \ldots, S_n$, such that $M \bs K = S_1 \cup \ldots \cup S_n$ and the closures of the $S_i$ in $\overline{M}$ are pairwise disjoint,
\item there exist diffeomorphisms $\Psi_i : L_i \times (0, \epsilon_i) \to S_i$, such that for every $k\in \N_0$ and $i \in \{1, \ldots, n\}$ there exists a $C_{k,i} > 0$ for which the following inequality holds on $(0,\epsilon_i) \times L_i$: 
$$r^k|\nabla_{C_i}^k (\Psi_i^* \varphi - \varphi_{C_i})|_{g_{C_i}} \leq C_{k,i} r^{\nu}.$$
\end{enumerate}

A space $\overline{M}$ equipped with a conically singular $G_2$ structure will also be called a \textit{conifold with a $G_2$ structure}. If the $G_2$ structure is torsion free, we will call it a \textit{$G_2$ conifold}. If the $G_2$ structure is nearly parallel, we will call $(\overline{M}, \varphi)$ a \textit{nearly $G_2$ conifold}.
\end{defn}
Note that our terminology is slightly different from the literature, where a conifold usually also allows for AC ends.

It is clear that similar definitions are possible for manifolds with $\SU(3)$ structures. To avoid needless repetitions we leave out the precise definitions, but mention that to define a Calabi--Yau cone, one has to employ Sasaki--Einstein structures on 5-manifolds. Details on this may be found, for example, in section 2 of \cite{FH}.

\section{Smooth desingularizations and the obstruction equation}
\label{sec:desingularization}
The following definition will formalize the idea of the desingularization of nearly $G_2$ conifolds by an AC $G_2$ manifold. It can be seen as an analogue of Cheeger--Gromov convergence for a smooth 1-parameter family instead of a sequence.

Suppose $(C, \varphi_C)$ is a $G_2$ cone, $(N, \varphi_{AC})$ is an AC $G_2$ manifold with cone $(C, \varphi_C)$ and $(\overline{X},\varphi_{CS})$ is a nearly $G_2$ conifold, which has a singular point $x_0$ modelled on the cone $(C, \varphi_C)$.
\begin{defn}
\label{def:G2desing}
Suppose $\overline{M}$ is a topological space, $\Sigma \subset M$ a finite set and $\varphi(t)$, $t \in (0, \epsilon)$, is a smooth family of conically singular nearly parallel $G_2$ structures on $M = \overline{M} \bs \Sigma$. This family is a \textit{smooth desingularisation} of $(\overline{X}, \varphi_{CS})$ at $x_0$ by $(N, \varphi_{AC})$, if the following conditions are met:
\begin{enumerate}
\item For every $t \in (0,\epsilon)$ there exist open sets $U_t \subset X$, $\widetilde{U}_t \subset M$, such that
\begin{enumerate}
\item $U_s \subset U_t$ for every $s > t$,
\item $\bigcup_{t \in (0,\epsilon)} U_t = X$.
\end{enumerate}
\item For every $t \in (0, \epsilon)$ there exists a diffeomorphism $F_t : U_t \to \widetilde{U}_t$ with the following significance:
\begin{enumerate}
\item Let $\mc{U} = \bigcup_{t \in (0,\epsilon)} \{t\} \times U_t$.  The forms $F_t^*\varphi(t)$ on $U_t$ define a smooth section $\Phi \in \Gamma(\mc{U}, \pi^* \Lambda^3 T^*X)$, where $\pi : \mc{U} \to X$ is the canonical projection.
\item For any fixed $t_0$, the restriction of $\Phi$ to $U_{t_0} \times (0, t_0) \subset \mc{U}$ extends to a smooth section of $U_{t_0} \times \halfopen{0}{t_0}$ and $\Phi\big|_{\{0\} \times U_{t_0}} = \varphi_{CS}\big|_{U_{t_0}}$.
\end{enumerate}
\item For every $t \in (0, \epsilon)$ there exist open sets $V_t \subset N$ and $\widetilde{V}_t \subset M$, such that
\begin{enumerate}
\item $V_s \subset V_t$ for every $s > t$,
\item $\bigcup_{t \in (0,\epsilon)} V_t = N$.
\end{enumerate}
\item For every $t \in (0, \epsilon)$ there exists a diffeomorphism $G_t : V_t \to \widetilde{V}_t$ with the following significance:
\begin{enumerate}
\item Let $\mc{V} = \bigcup_{t \in (0,\epsilon)} \{t\} \times V_t$.  The forms $t^{-3} G_t^*\varphi(t)$ on $V_t$ define a smooth section $\hat\Phi \in \Gamma(\mc{V}, \pi^* \Lambda^3 T^*X)$, where $\pi : \mc{V} \to N$ is the canonical projection.
\item For any fixed $t_0$, the restriction of $\hat\Phi$ to $V_{t_0} \times (0, t_0)$ extends to a smooth section of $V_{t_0} \times \halfopen{0}{t_0} \subset \mc{W}$ and $\hat\Phi\big|_{\{0\} \times V_{t_0}} = \varphi_{AC}\big|_{V_{t_0}}$.
\end{enumerate}
\item For every $t$ the sets $\widetilde{U}_t$ and $\widetilde{V}_t$ cover $M$, i.e.\@ $M = \widetilde{U}_t \cup \widetilde{V}_t$.
\end{enumerate}
\end{defn}
\begin{figure}[h!]
\caption{Spaces and sets occuring in a desingularization}
\centering
\includegraphics[width=\textwidth]{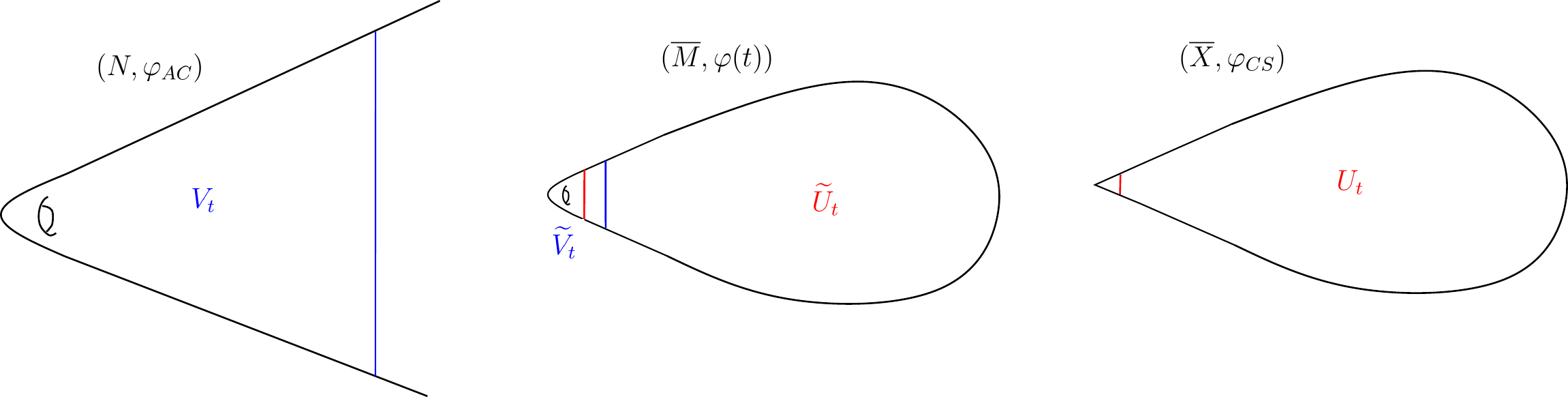}
\label{fig}
\end{figure}
Since this definition is fairly technical, we explain it in informal terms. Some of the sets appearing in the definition are illustrated in figure \ref{fig}. Roughly speaking, we want the forms $\varphi(t)$ to converge in the smooth Cheeger--Gromov sense to the conifold $(\overline{X}, \varphi_{CS})$ as $t$ goes to $0$. Similarly, a rescaling of the forms $\varphi(t)$ should converge to the AC $G_2$ manifold $(N, \varphi_{AC})$ as $t$ goes to $0$. The form $\varphi_{CS}$ is only defined on the non-compact manifold $X$ and for this reason we pick an exhausion of $X$ in (1). This is an exhaustion that expands as $t$ decreases. The second point then essentially says that $\varphi(t)$ can be pulled back to this exhaustion, so that $\varphi(t)$ defines a smooth family of forms on the ``space time'' of the exhaustion. The convergence as $t \to 0$ is then encoded as condition (2)(b), which says that the smooth family extends smoothly to the time $0$ slice. It is crucial for our application that the family depends smoothly on the parameter $t$ up to and including time $0$.

Definition \ref{def:G2desing} can easily be adapted to the nearly K\"ahler setting. In the interest of space we refrain from giving the full definition again. Instead, the following definition only serves as a reference point and to pin down the terminology.
\begin{defn}
  \label{def:NKdesing}
  Suppose $(C, \omega_C,\Omega_C)$ is a 6-dimensional Calabi--Yau cone, $(N, \omega_{AC}, \Omega_{AC})$ an asymptotically conical Calabi--Yau manifold asymptotic to $(C,\omega_C,\Omega_C)$ and $(\overline{X}, \omega_{CS}, \Omega_{CS})$ a nearly K\"ahler conifold, which has a singular point $x_0$ modelled on the cone $(C, \omega_C, \Omega_C)$. A \textit{smooth desingularization} of $(\overline{X}, \omega_{CS}, \Omega_{CS})$ at $x_0$ by $(N, \omega_{AC}, \Omega_{AC})$ is a family of conically singular nearly K\"ahler structures $(\omega(t), \Omega(t))$ satisfying the conditions (1)-(5) in \ref{def:G2desing}, changing the occurences of $\varphi(t)$ by $\omega(t)$ and $\Omega(t)$ as necessary.
\end{defn}

The existence of a smooth desingularization implies existence of a solution to an equation on the AC manifold, which we will call the obstruction equation. This will be used to prove the corollaries \ref{thm:G2obstruction} and \ref{thm:SU3obstruction}.

We start with the $G_2$ case.

Suppose $(C, \varphi_C)$ is a $G_2$ cone, $(N, \varphi_{AC})$ is an AC $G_2$ manifold with cone $(C, \varphi_C)$ and $(\overline{X},\varphi_{CS})$ is a conically singular nearly parallel $G_2$ manifold, which has a singular point $x_0$ modelled on the cone $(C, \varphi_C)$.

Let $\overline{M}$ be a topological space, $\Sigma \subset \overline{M}$ a finite space and suppose $M = \overline{M} \bs \Sigma$ is a smooth 7-manifold. Suppose $\varphi(t)$, $t\in(0,\epsilon)$, is a smooth desingularization of $(\overline{X}, \varphi_{CS})$ at $x_0$ by $(N, \varphi_{AC})$.

Then the family $\widetilde{\varphi}(t) = t^{-3} \varphi(t)$ satisfies
$$d \widetilde{\varphi}(t) = 4 t^{-3} \Theta(\varphi(t)) = 4 t \Theta(\widetilde{\varphi}(t)),$$
where we use the homogeneity $\Theta(t^3 \varphi) = t^4 \Theta(\varphi)$.

Now let $U_t$, $\mc{U}$ and $G_t$ be as in definition \ref{def:G2desing} and define $\hat{\varphi}(t) = t^{-3} G_t^*\varphi(t)$. Then $\hat\varphi(t)$ satisfies
$$d \hat\varphi(t) = 4 t \Theta(\hat{\varphi}(t))$$
on every $U_{t_0} \times \halfopen{0}{t_0}$. A priori the second equation is only satisfied on $U_{t_0} \times (0, t_0)$, but since $\hat \varphi$ is smooth on $U_{t_0} \times \halfopen{0}{t_0}$, it also holds on $U_{t_0} \times \{0\}$.

Taking the derivative of the first equation in $t$-direction yields $d \partial_t \hat\varphi(t) = 4 \Theta(\hat\varphi(t)) + 4 t d \Theta[\hat\varphi(t)] \partial_t \hat\varphi(t)$. Evaluating at time $t=0$ we obtain $d \partial_t \hat\varphi(0) = 4 \Theta(\hat\varphi(0))$.
Note that this equations hold on any $U_{t_0}$. Since $U_{t_0}$ exhausts $N$ as $t_0 \to 0$, it follows that the equation holds on all of $N$. According to definition \ref{def:G2desing} at $t=0$ the form $\hat\varphi(t)$ becomes the form $\varphi_{AC}$. Now let $\eta = \partial_t \hat \varphi(0)$. Then we have derived the \textit{obstruction equation}
$$d \eta = 4 \Theta(\varphi_{AC}) = 4 *_{g_{AC}} \varphi_{AC}.$$
Therefore $*_{g_{AC}} \varphi_{AC}$ is exact. This proves theorem \ref{thm:G2obstruction}.

The obstruction equation admits a different interpretation coming from the associated gluing problem. If one tries to glue the asymptotically conical $G_2$ manifold into the nearly $G_2$ conifold one faces two problems. The first is that the equations satisfied on each piece are obviously different. Given a solution of the obstruction equation $\eta$, we find that
$$d(\varphi_{AC} + \epsilon \eta) = 4 \epsilon \Theta(\varphi_{AC}),$$
i.e.\@ $\varphi_{AC} + \epsilon \eta$ solves the nearly $G_2$ equation at scale $\epsilon$ to first order. The second issue is that $\varphi_{AC}$ at infinity matches $\varphi_{CS}$ at the singularity only to the zeroth order. We could now derive a condition on the expansion of $\eta$ at infinity, which ensures that $\varphi_{AC} + \eta$ matches $\varphi_{CS}$ at least to the first order. As we will see later, even without this extra condition it is difficult to solve the equation $d\eta = 4 \Theta(\varphi_{AC})$ and therefore we will not pursue this here.

Note that in principle one can derive obstruction equations to any order by taking higher derivatives. However, as we will find that already the first equation is not soluble in many cases, this will also not be pursued.

This interpretation of the obstruction equations is closely related to the works of Biquard \cite{Biquard1,Biquard2}, Morteza--Viaclovsky \cite{MV}, Ozuch \cite{Ozuch1, Ozuch2, Ozuch4}. As explained in the introduction, the work of Ozuch suggests that much stronger results may be true. The interpretation of solutions of the obstruction equations as a starting point for a refined gluing construction is crucial for Biquard's and Ozuch's method of proof.

For nearly K\"ahler manifolds the derivation of the obstruction equations is very similar. To this end, we now assume that $(\overline{X}, \omega_{CS}, \Omega_{CS})$ is a nearly K\"ahler conifold with a singularity at $x_0$ modelled on the Calabi--Yau cone $(C = C(L), \omega_C, \Omega_C)$. Suppose $(N, \omega_{AC}, \Omega_{AC})$ is an asymptotically conical Calabi--Yau manifold asymptotic to $(C, \omega_C, \Omega_C)$ and suppose that $(\overline{M}, \omega(t), \Omega(t))$ is a desingularization of $(\overline{X}, \omega_{CS}, \Omega_{CS})$ at $x_0$ by $(N, \omega_{AC}, \Omega_{AC})$.

To derive the obstruction equations in this case, observe that by the construction above we obtain a family of forms $(\hat\omega(t), \hat\Omega(t))$, such that
$$d \hat\omega(t) = -3 t \Real \hat\Omega(t), \qquad d \Imag \hat\Omega(t) = 2 t \hat\omega(t)^2.$$
Taking the time derivative at $t=0$ we obtain a 2-form $\nu$ and a 3-form $\eta$ satisfying
$$d \nu = -3 \Real \Omega_{AC}, \qquad d \eta = 2 \omega_{AC}^2.$$
This proves theorem \ref{thm:SU3obstruction}. As in the $G_2$ case, we may also interpret this obstruction equation coming from the associated gluing problem.

\section{Analysis on asymptotically conical manifolds}
The proof of theorems \ref{thm:topG2} and \ref{thm:topSU3} will require an understanding of certain analytical and topological properties of AC manifolds. For the convenience of the reader we summarize the required results here.

Let $(M, g)$ be an asymptotically conical Riemannian manifold and let $\rho : M \to \R$ be a radial function.

For any tensor $T$, any integer $l \in \N_0$, $p \in \halfopen{1}{\infty}$ and $\lambda \in \R$ the $L^p_{l,\lambda}$-Sobolev norm is defined by 
$$\|u\|_{L^p_{l, \lambda}} = \left( \sum_{j \leq l} \int_M  \left| \rho^{j-\lambda} \nabla^j T\right|_g^p \rho^{-n} \vol_M \right)^{1/p}.$$
By $L^p_{l,\lambda}$ we denote the completion of the space of smooth tensors with compact support. If necessary, we will be more precise about the space of tensors considered by putting it in brackets. For example, the Sobolev space of differential $k$-forms will be denoted by $L^p_{l,\lambda}(\Lambda^k T^*M)$. The parameter $\lambda$ is called the \textit{weight} of the Sobolev space. Note that for $\lambda = -n/2$ and $p = 2$ we recover the unweighted $L^2$ space, $L^2_{0, -n/2} = L^2$.

There is a well developed theory of elliptic operators acting on these spaces. We will only need results for the Laplacian acting on functions, $\Delta_g = -\tr_g \Hess g$. It is easy to see that $\Delta_g$ is a bounded operator
$$\Delta_g : L^p_{l+2,\lambda} \to L^p_{l,\lambda-2}$$
for any choice of $p, l, \lambda$ as above. The the set of \textit{critical rates} on the cone $(C, g_C)$ is defined to be
$$\mc{D} = \left\{\lambda \in \R : \exists f \in C^\infty(C): \Delta_g \kappa = 0, \; \mc{L}_{r \partial r} f = \lambda \kappa \right \}.$$
This means that $\lambda \in \R$ is a critical rate of the Laplacian, if there exists a homogeneous harmonic function of rate $\lambda$ on the cone. This set is crucial for describing the mapping properties of $\Delta_g$, as the following theorem explains.
\begin{thm}
  \label{thm:fredholm}
  The Laplacian $\Delta_g$ has the following properties.
  \begin{enumerate}
  \item If $\lambda \notin \mc{D}$, the operator $\Delta_g : L^p_{l+2, \lambda}  \to L^p_{l,\lambda-2}$ is Fredholm.
  \item The kernel \mbox{$\ker \Delta_g : L^p_{l+2, \lambda}  \to L^p_{l,\lambda-2}$} does not depend on $p$ or $l$ and is denoted by $\ker_\lambda \Delta_g$.
  \item If $\lambda_1 < \lambda_2 \in \R$ are such that $[\lambda_1, \lambda_2] \cap \mc{D}_k = \varnothing$, then $\ker_{\lambda_1} \Delta = \ker_{\lambda_2} \Delta$.
  \end{enumerate}
\end{thm}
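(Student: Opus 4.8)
The plan is to treat this as a special case of the Lockhart--McOwen theory of elliptic operators on manifolds with conical ends, the model operator at infinity being the Laplacian $\Delta_{g_C}$ of the exact cone. The first step is to identify the critical set $\mc{D}$ with the \emph{indicial roots} of $\Delta_{g_C}$. Writing $n = \dim M$ and separating variables, for an eigenfunction $\phi$ of the link Laplacian with $\Delta_{g_L}\phi = \mu\phi$ one computes
\[
\Delta_{g_C}\left(r^\lambda \phi\right) = \left(-\lambda^2 - (n-2)\lambda + \mu\right) r^{\lambda - 2}\phi.
\]
Hence $r^\lambda\phi$ is harmonic on the cone exactly when $\lambda^2 + (n-2)\lambda - \mu = 0$, so $\mc{D}$ consists of the numbers $\lambda = \tfrac12\left(-(n-2)\pm\sqrt{(n-2)^2 + 4\mu}\right)$ as $\mu$ ranges over the (discrete) spectrum of $\Delta_{g_L}$. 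In particular $\mc{D}$ is a discrete subset of $\R$, which is what makes the statement meaningful.

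For part (1) I would build a parametrix by patching. On the compact core $K$ one uses the interior $L^p$ elliptic estimate for $\Delta_g$. On the end one uses the model operator: the substitution $t = -\log r$ turns dilation into translation and $\Delta_{g_C}$ into a translation-invariant elliptic operator on the cylinder $\R\times L$, whose Mellin/Fourier symbol is the indicial family $\zeta\mapsto \Delta_{g_L} - \zeta^2 - (n-2)\zeta$ acting on $L$. This family is invertible on the weight line $\Real\zeta = \lambda$ precisely when $\lambda\notin\mc{D}$, and inverting via the inverse Mellin transform produces a bounded right- and left-inverse of $\Delta_{g_C}$ on the weighted spaces. Because the AC condition forces $\Delta_g - \Delta_{g_C}$ to be a lower-order perturbation in the weighted norms (its coefficients decay at the negative rate $\nu$), gluing the interior and model parametrices yields an operator $P$ with $\Delta_g P - \id$ and $P\Delta_g - \id$ compact, so $\Delta_g\colon L^p_{l+2,\lambda}\to L^p_{l,\lambda-2}$ is Fredholm for $\lambda\notin\mc{D}$.

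Parts (2) and (3) are a regularity statement and a no-jump argument. For (2), elliptic regularity shows any solution of $\Delta_g u = 0$ in $L^p_{l+2,\lambda}$ is smooth, and the indicial analysis above gives it a polyhomogeneous expansion at infinity in powers $r^{\lambda_j}$ (with possible logarithms) indexed by the indicial roots; the admissible rates are governed only by $\lambda$ and not by $(p,l)$, so $\ker_\lambda\Delta_g$ is the same set of smooth functions for all $p,l$. For (3), the nesting $L^p_{l+2,\lambda_1}\subset L^p_{l+2,\lambda_2}$ for $\lambda_1<\lambda_2$ gives $\ker_{\lambda_1}\Delta\subset\ker_{\lambda_2}\Delta$ immediately. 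Conversely, a harmonic $u\in\ker_{\lambda_2}\Delta$ has an expansion involving only indicial roots below $\lambda_2$; since $[\lambda_1,\lambda_2]\cap\mc{D}=\varnothing$ (so in particular $\lambda_1\notin\mc{D}$), every such root lies strictly below $\lambda_1$, whence $u\in L^p_{l+2,\lambda_1}$ and $\ker_{\lambda_2}\Delta\subset\ker_{\lambda_1}\Delta$.

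I expect the main obstacle to be the analytic heart of the model-operator step: proving that the Mellin-inverted parametrix for $\Delta_{g_C}$ is bounded on the weighted $L^p$ spaces (not merely $L^2$, where the argument is a clean contour integral), and establishing the polyhomogeneous asymptotics on which parts (2) and (3) rely. Both are standard in the Lockhart--McOwen and b-calculus frameworks, so in the write-up I would state the indicial computation explicitly and cite this theory for the $L^p$ boundedness and the expansion, rather than reproving it.
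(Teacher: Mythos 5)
Your proposal is correct and takes essentially the same route as the paper, which offers no proof of its own but simply cites section 4.2 of Karigiannis--Lotay and the Lockhart--McOwen theory. Your sketch --- identifying $\mc{D}$ with the indicial roots $\tfrac{1}{2}\bigl(-(n-2)\pm\sqrt{(n-2)^2+4\mu}\bigr)$ (your computation agrees with the paper's formula $\Delta_{g_C} r^\lambda f = r^{\lambda-2}(\Delta_{g_L} f - \lambda(\lambda+n-2)f)$), constructing a parametrix by Mellin inversion of the translation-invariant model on the cylinder together with the rate-$\nu$ decaying perturbation, and deducing (2) and (3) from polyhomogeneous expansions whose admissible exponents depend only on $\lambda$ --- is precisely the content of that cited theory, with the genuinely hard steps ($L^p$ boundedness of the Mellin parametrix, the expansion itself) appropriately deferred to the same references.
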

A close reference for this result is section 4.2 of \cite{KL}. (In this case, the theory is discussed in dimension 7, but it holds in any dimension.) This particular approach to the Fredholm theory of elliptic operators on non-compact manifolds goes back to Lockhart and McOwen, see \cite{LM}.

On a closed Riemannian manifold, one consequence of Hodge theory is that any coclosed, exact differential form must vanish. On non-compact manifolds this statement fails in general. The following theorem, due to Lockhart, gives a partial replacement for this statement on asymptotically conical Riemannian manifolds.
\begin{thm}
  \label{thm:hodge_theory}
  Suppose $(M,g)$ be an n-dimensional AC manifold.

  If $\kappa \in L^2(\Lambda^k T^*M)$ with $k \geq n/2$ is exact and coclosed, then $\kappa \equiv 0$.
\end{thm}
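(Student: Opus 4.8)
The plan is to reduce the statement to an integration by parts argument and to control the resulting boundary term at infinity. Writing $\kappa = d\alpha$ for a primitive $\alpha \in \Omega^{k-1}(M)$ and using that $\kappa$ is coclosed, one has $d(\ast\kappa)=\pm\ast\, d^\ast\kappa = 0$, so that
\[
  \kappa\wedge\ast\kappa = d\alpha\wedge\ast\kappa = d(\alpha\wedge\ast\kappa).
\]
Integrating over the sublevel set $M_R=\{\rho\le R\}$ and applying Stokes' theorem gives $\int_{M_R}|\kappa|^2\,\vol_M = \int_{\partial M_R}\alpha\wedge\ast\kappa$. The whole proof then rests on showing that the boundary term tends to $0$ as $R\to\infty$, which in turn requires sufficiently fast decay of $\kappa$ together with a primitive $\alpha$ of comparable decay.

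First I would record that $\kappa$ is harmonic: it is closed because it is exact and coclosed by hypothesis, so $\Delta_g\kappa=(dd^\ast+d^\ast d)\kappa=0$, and elliptic regularity makes it smooth. To quantify the decay I would invoke the analogue of theorem \ref{thm:fredholm} for the Hodge Laplacian acting on $k$-forms; the Lockhart--McOwen theory \cite{LM} applies verbatim to this elliptic operator. Since $L^2=L^2_{0,-n/2}$ sits at the weight $-n/2$ and the weighted kernel is locally constant in the weight between critical rates, a genuine $L^2$ harmonic form in fact lies in $L^2_{l,\lambda}$ for every $l$ and for some weight $\lambda<-n/2$ strictly below the borderline. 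By weighted Sobolev embedding this yields pointwise bounds $|\nabla^j\kappa|_g\le C_j\,\rho^{\lambda-j}$ with $\lambda<-n/2$, and the same holds for $\ast\kappa$.

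The technical heart, and the step where the hypothesis $k\ge n/2$ is essential, is to produce a primitive $\alpha$ of $\kappa$ with $|\alpha|_g\le C\,\rho^{\lambda+1}$. A general primitive is only defined modulo closed forms and need not decay; the obstruction to choosing a decaying one is measured by the asymptotic (indicial) modes of $\kappa$ that are pullbacks of harmonic forms on the link $L$. On the model cone these modes occur at the two indicial rates associated with $H^k(L)$, and a short computation of indicial roots shows that the mode obstructing a decaying primitive can be $L^2$ near infinity only when $k<n/2$. Thus for $k\ge n/2$ the exactness of $\kappa$, together with the vanishing of this obstruction, allows one to solve $d\alpha=\kappa$ in the weighted space $L^2_{l,\lambda+1}$; concretely I would set this up as a weighted Hodge decomposition for the de Rham complex on $M$, again following \cite{LM,KL}, choosing $\alpha$ coclosed so that it is itself harmonic and inherits the decay rate $\lambda+1$. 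I expect this construction, and the precise bookkeeping of indicial roots at the threshold $k=n/2$, to be the main obstacle.

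With such an $\alpha$ in hand the boundary estimate is immediate: $\partial M_R$ has $(n-1)$-volume of order $R^{n-1}$, while on it $|\alpha\wedge\ast\kappa|\le C\,\rho^{(\lambda+1)+\lambda}$, so that
\[
  \left|\int_{\partial M_R}\alpha\wedge\ast\kappa\right|\le C\,R^{2\lambda+n},
\]
which tends to $0$ as $R\to\infty$ because $\lambda<-n/2$ forces $2\lambda+n<0$. Hence $\int_M|\kappa|^2\,\vol_M=0$ and $\kappa\equiv 0$. As a sanity check the degree hypothesis is genuinely needed: the dual form $\ast\kappa$ of degree $n-k\le n/2$ is again $L^2$ and harmonic but is only coclosed rather than exact, so the argument does not apply to it, consistent with the fact that non-trivial $L^2$ harmonic forms, such as pullbacks of harmonic forms on the link, do occur in low degrees.
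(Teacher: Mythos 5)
The paper does not actually prove this statement: its ``proof'' is the citation to Lockhart (Theorem 7.4 and Example 0.15 of \cite{Lockhart}), and your outline is essentially a reconstruction of that Lockhart-type argument. The skeleton is right: harmonicity and smoothness of $\kappa$, a weighted-decay upgrade via Lockhart--McOwen theory, a decaying primitive, and the Stokes identity $\int_{M_R}|\kappa|^2\vol = \int_{\partial M_R}\alpha\wedge\ast\kappa$ with the final estimate $O(R^{2\lambda+n})$. But two steps you lean on are genuinely gapped, and they are precisely where the hypothesis $k\geq n/2$ lives. First, the decay upgrade ``$\kappa\in L^2_{l,\lambda}$ for some $\lambda<-n/2$'' via local constancy of the kernel between critical rates requires that $-n/2$ is not itself a critical rate; at middle degree $k=n/2$ it typically is one -- the two closed-and-coclosed cone modes $\pi^*\xi$ with $\xi$ a harmonic $k$-form on $L$ (pointwise rate $-k$) and $r^{2k-n-1}dr\wedge\pi^*\eta$ with $\eta$ a harmonic $(k-1)$-form on $L$ (pointwise rate $k-n$) collide at $-n/2$, giving a double indicial root with possible $\log r$ solutions. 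The repair is the asymptotic expansion of kernel elements at a slightly larger weight together with the observation that the borderline homogeneous and logarithmic modes are not $L^2$, so their coefficients must vanish; ``locally constant kernel'' alone does not deliver this.

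Second, and more seriously, the existence of the decaying primitive is asserted rather than proved, and the shortcut ``choose $\alpha$ coclosed so that it is harmonic and inherits the decay rate $\lambda+1$'' does not work as stated: harmonicity of $\alpha$ implies nothing about its decay, and whether $d$ maps $L^2_{l+1,\lambda+1}$ onto $\kappa$ is exactly the weighted Hodge decomposition you defer -- this is the substance of Lockhart's paper, not a routine invocation. There is also a global patching issue your estimate misses. One can build a primitive of rate $\lambda+1$ on the end mode-by-mode (possible because exactness of $\kappa$ kills the period mode $\pi^*\xi$, and the $H^{k-1}(L)$-mode of rate $k-n\geq -n/2$ is not $L^2$ when $k\geq n/2$, so neither appears in the expansion of $\kappa$), but a \emph{global} primitive differs from this end-primitive by a closed form whose harmonic part $\pi^*\eta$ decays only like $\rho^{-(k-1)}$, not like $\rho^{\lambda+1}$. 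The corresponding boundary contribution is $\int_{L_R}\pi^*\eta\wedge\ast\kappa = O(R^{\lambda+n-k})$, which tends to zero precisely because $\lambda<-n/2\leq k-n$ for $k\geq n/2$ -- this is where the degree hypothesis actually bites, and it is absent from your boundary estimate, which only covers the $\rho^{\lambda+1}$ part of $\alpha$. Your remark about which obstructing modes are $L^2$ points in the right direction, but as written the proposal stops short of a proof at exactly the step you yourself flag as the main obstacle; with the two repairs above it does close up, and it is then faithful to the argument behind the citation the paper relies on.
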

See \cite{Lockhart}, theorem 7.4 and example 0.15.

\section{Approximate potentials and their asymptotics}
\label{sec:approx_potential}

Let $(L, g_L)$ be a closed Riemannian manifold. The cone metric $g_C = dr^2 + r^2 g_C$ on  $C = (0, \infty) \times L$ has a \textit{potential}: the function $\frac 1 2 r^2$ satisfies
$$\Hess_{g_C} \frac 1 2 r^2 = g_C.$$
Conversely, the existence of a function whose Hessian is the metric  implies that the metric is locally conical. The only complete manifold admitting such a function is Euclidean space. This is the content of the following theorem, due to Tashiro. \cite{Tashiro}
\begin{thm}
  \label{thm:tashiro}
  Let $(M,g)$ be a complete $n$-dimensional Riemannian manifold. If there exists $u \in C^2(M)$ with
  $$\Hess_g u = g,$$
  then $(M,g)$ is isometric to $(\R^n, g_{\mathrm{eucl}})$.
\end{thm}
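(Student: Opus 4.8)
The plan is to study the gradient vector field $X = \grad u$, whose defining property unwinds the Hessian condition into $\nabla_Y X = Y$ for every tangent vector $Y$. First I would extract a first integral: differentiating $|\grad u|^2$ in an arbitrary direction $Y$ gives $Y(|\grad u|^2) = 2 g(\nabla_Y X, X) = 2 g(Y, X) = 2 Y(u)$, so that $\grad(|\grad u|^2 - 2u) = 0$ and hence $|\grad u|^2 = 2u + c$ for a constant $c$. Next, along any unit-speed geodesic $\gamma$ the function $h(s) = u(\gamma(s))$ satisfies $h''(s) = \Hess_g u(\gamma', \gamma') = g(\gamma',\gamma') = 1$, so $u$ restricts to an upward parabola $\tfrac12 s^2 + h'(0)s + h(0)$ on every geodesic.

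This quadratic growth is what lets completeness enter. Using Hopf--Rinow together with the parabola estimate along minimizing geodesics from a fixed basepoint, I would show that the sublevel sets $\{u \le a\}$ are bounded, hence compact, so $u$ attains a global minimum at some point $p$, which is necessarily a critical point. Because the Hessian equals $g$ and is therefore positive definite, $u$ is strictly convex and $p$ is its unique critical point. After adding a constant (which does not affect the Hessian equation) I may assume $u(p) = 0$, and evaluating $|\grad u|^2 = 2u + c$ at $p$ forces $c = 0$, so that $|\grad u|^2 = 2u$ throughout.

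I would then read off the radial structure centred at $p$. Along a geodesic $\gamma$ emanating from $p$ the boundary conditions $h(0) = 0$, $h'(0) = 0$ give $u(\gamma(s)) = \tfrac12 s^2$; moreover $\tfrac{D}{ds}\bigl(\grad u - s\gamma'\bigr) = \nabla_{\gamma'}\grad u - \gamma' = \gamma' - \gamma' = 0$, and since this parallel field vanishes at $s=0$ we obtain $\grad u(\gamma(s)) = s\,\gamma'(s)$. Thus in geodesic polar coordinates $\grad u = r\partial_r$, i.e.\ $u = \tfrac12 r^2$ with $r$ the distance to $p$, wherever such coordinates are valid. On this domain the equation $\Hess_g(\tfrac12 r^2) = g$ forces the second fundamental form of the geodesic spheres to equal $\tfrac1r$ times their induced metric; feeding this into the radial evolution equation $\partial_r g_r = 2\,\mathrm{II}$ and matching the small-$r$ asymptotics with the round sphere yields $g = dr^2 + r^2 g_{S^{n-1}}$, the Euclidean metric.

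The main obstacle is global: I must upgrade this local computation to an honest isometry with all of $(\R^n, g_{\mathrm{eucl}})$, which amounts to showing that $\exp_p$ is a diffeomorphism, i.e.\ that the cut locus of $p$ is empty. Injectivity of $\exp_p$ comes for free from the two identities above: if two geodesics from $p$ meet at a common point $q$, then $u(q)$ pins down their common length $s$ and $\grad u(q) = s\gamma'(s)$ pins down their common terminal velocity, so the geodesics coincide. The polar computation already shows the metric is flat off the cut locus, so there are no conjugate points; a cut point would therefore have to be reached by two distinct minimizing geodesics, which the injectivity argument rules out. Hence $\Cut(p) = \varnothing$, the map $\exp_p : (T_pM, g_{\mathrm{eucl}}) \to (M,g)$ is a global isometry, and the theorem follows. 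The delicate point to get right is that the identity $\grad u = s\gamma'$ holds along the entire (complete) geodesic rather than only inside the injectivity domain, since it is precisely this that closes the cut-locus argument.
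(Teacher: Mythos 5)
Your proof is correct; but note that the paper itself contains no proof of this statement to compare against --- it is imported wholesale from Tashiro \cite{Tashiro}. Tashiro's own treatment is considerably more general: he classifies complete Riemannian manifolds admitting a concircular scalar field, i.e.\ a solution of $\Hess_g u = \phi\, g$ with $\phi$ a function (allowed to depend on $u$), by analysing the warped-product structure induced on the regular level sets of $u$ together with the possible behaviour at critical points; the present theorem is the single branch $\phi \equiv 1$ of his trichotomy (Euclidean space, round sphere, hyperbolic/warped-product models). Your argument is instead a direct, self-contained proof tailored to $\phi \equiv 1$: the first integral $|\grad u|^2 = 2u + c$, the parabola $u \circ \gamma(s) = \tfrac12 s^2 + h'(0)s + h(0)$ along geodesics (which gives properness of $u$ via Hopf--Rinow and hence a minimum point $p$), and the globally valid identity $\grad u(\gamma(s)) = s\,\gamma'(s)$ along every geodesic from $p$ --- you are right that the global validity of this identity, which needs only completeness and the pointwise equation $\nabla_{\gamma'}\grad u = \gamma'$, is the crux. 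What your route buys is elementarity and brevity; what Tashiro's buys is the full classification, which is what one needs if $\Hess u$ is only conformal to $g$.

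One step of yours deserves tightening, and there is a shortcut that removes it entirely. The clause ``flat off the cut locus, so there are no conjugate points'' is compressed: to exclude a first conjugate point at a cut point $q = \gamma(s_0)$ you need that the interior of the minimizing segment avoids $\Cut(p)$ (true: an interior point of a minimizing geodesic is neither conjugate to $p$ nor joined to $p$ by a second minimizing geodesic) and that the curvature also vanishes \emph{at} $q$ by continuity, so that Jacobi fields along the closed segment satisfy $J''=0$ in a parallel frame and, vanishing at $s=0$, cannot vanish again at $s_0>0$. Alternatively, you already have everything needed for a cleaner finish: by Hopf--Rinow every point is reached by a minimizing geodesic from $p$, so your boundary conditions give $u = \tfrac12 d(p,\cdot)^2$ on all of $M$; comparing with $u(\gamma(s)) = \tfrac12 s^2$ along an arbitrary unit-speed geodesic from $p$ forces $d(p,\gamma(s)) = s$ for all $s$, i.e.\ \emph{every} geodesic from $p$ is a ray. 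Hence the cut time is infinite in every direction, $\Cut(p) = \varnothing$ and $\exp_p$ is a diffeomorphism with no case analysis, after which your polar computation $g = dr^2 + r^2 g_{S^{n-1}}$ holds globally and yields the isometry with $(\R^n, g_{\mathrm{eucl}})$.
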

On an AC manifold $(M,g)$ we can study \textit{approximate potentials}, that is functions $u \in C^\infty(M)$, which are asymptotic to $\frac 1 2 r^2$ and satisfy $\Delta_g u = -n$. These functions and understanding their precise asymptotics will be fundamental for theorems \ref{thm:topG2} and \ref{thm:topSU3}. The following theorem ensures existence of such a function and describes its asymptotics.

The theorem is proven under the assumption that we have coordinates at infinity on the asymptotically manifold, in which the metric is in Bianchi gauge. On a Riemannian manifold $(M,g)$, the Bianchi operator on a symmetric 2-form is $B_g h = \delta_g h + \frac 1 2 d \tr_g h$. A metric $\widetilde{g}$ is in Bianchi gauge relative to $g$, if $B_g (\widetilde{g} - g) = B_g \widetilde{g} = 0$. By results in \cite{KS}, existence of coordinates at infinity as in the theorem may be assumed for any asymptotically conical Ricci flat manifold.
\begin{thm}
  \label{thm:approx_potential}
  Let $(M,g)$ be an asymptotically Riemannian manifold with cone at infinity $(C, g_C)$ and  rate $\nu < -2$. Suppose that $\Psi : (R, \infty) \times K \to M \bs K$ are coordinates at infinity, such that $B_{g_C} \Psi^* g = 0$ and let $\rho : M \to \R$ be a radial function.

  There exists a unique function $u \in C^\infty(M)$, such that
  $$\Delta_g u = -n, \qquad u - \frac 1 2 \rho^2 \in L^2_{2,-\epsilon}$$
  for some $\epsilon > 0$. This function satisfies
  $$u - \frac 1 2 \rho^2 \in L^2_{2, \lambda}$$
  with $\lambda = 2+\nu+\epsilon$ for every $\epsilon > 0$.
\end{thm}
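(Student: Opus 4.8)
\emph{Strategy.} The plan is to write $u = \tfrac12\rho^2 + v$ and set $E := \Delta_g(\tfrac12\rho^2) + n$, so that the equation $\Delta_g u = -n$ becomes $\Delta_g v = -E$. On the exact cone one has $\Delta_{g_C}(\tfrac12 r^2) = -n$, so near infinity $E = (\Delta_g - \Delta_{g_C})(\tfrac12 r^2)$. The operator $\Delta_g - \Delta_{g_C}$ is first order with coefficients controlled by $h := \Psi^* g - g_C$ and $\nabla_{g_C} h$; since $|h|_{g_C} = O(r^\nu)$ and $|\nabla_{g_C} h|_{g_C} = O(r^{\nu-1})$, while $\Hess_{g_C}(\tfrac12 r^2) = g_C$ and $|d(\tfrac12 r^2)|_{g_C} = O(r)$, a direct estimate gives $E = O(r^\nu)$ together with the matching bounds on its $g_C$-derivatives. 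In particular $E \in L^2_{0,\nu+\epsilon}$ for every $\epsilon > 0$. The Bianchi gauge hypothesis $B_{g_C}\Psi^* g = 0$, together with the asymptotic expansion of $h$ furnished by \cite{KS}, will be used to pin down the leading behaviour of $E$, which is what the obstruction step below requires.

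\emph{Solvability at the sharp weight.} First I would choose $\epsilon>0$ small enough that $\lambda := 2+\nu+\epsilon \notin \mc{D}$; this is possible since $\mc{D}$ is discrete. By Theorem \ref{thm:fredholm} the map $\Delta_g : L^2_{2,\lambda} \to L^2_{0,\lambda-2}$ is then Fredholm, and $E \in L^2_{0,\lambda-2}$ by the estimate above. If $-n < \nu < -2$ then $\lambda \in (2-n, 0)$; in this range both $\ker_\lambda \Delta_g$ and the cokernel (identified, via the unweighted $L^2$ pairing and formal self-adjointness, with $\ker_{2-n-\lambda}\Delta_g$) consist of harmonic functions decaying at infinity, hence vanish by the maximum principle on the connected manifold $M$. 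Thus $\Delta_g$ is an isomorphism at weight $\lambda$ and one solves $\Delta_g v = -E$ directly, obtaining $v \in L^2_{2,2+\nu+\epsilon}$.

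\emph{The obstruction step (main difficulty).} When $\nu \le -n$ the weight $\lambda$ lies at or below the critical rate $2-n$, and the cokernel of $\Delta_g$ at weight $\lambda$ is no longer trivial: it is identified with the space of harmonic functions $w$ on $M$ of rate strictly less than $2-n-\lambda = -n-\nu-\epsilon$, which now includes the constants and possibly further polynomially growing harmonic functions. Solvability of $\Delta_g v = -E$ in $L^2_{2,\lambda}$ then demands $\int_M E\, w \, \vol_g = 0$ for every such $w$. I would prove this by a boundary analysis at infinity: Green's identity on $\{\rho \le R\}$, using $\Delta_g w = 0$, rewrites $\int_{\{\rho\le R\}} E\,w\,\vol_g$ as a boundary integral over $\{\rho = R\}$ plus the term $n\int_{\{\rho\le R\}} w\,\vol_g$. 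On the exact cone these two contributions cancel identically (since $E \equiv 0$ there), and the deviation of each from its conical value is of order $R^{\,\beta + n + \nu}$, where $\beta$ is the rate of $w$; since $\beta < -n-\nu$, this exponent is negative and the limit $R\to\infty$ vanishes. This is the technical heart of the theorem: it is exactly where the hypothesis $\nu < -2$ enters essentially, and where the Bianchi gauge and the refined conical expansion of $h$ from \cite{KS} are needed to justify the cancellation of the leading boundary terms. Once the pairings vanish, $-E$ lies in the image of $\Delta_g$ and one again obtains $v \in L^2_{2,2+\nu+\epsilon} \subset L^2_{2,-\epsilon}$.

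\emph{Uniqueness and regularity.} If $u_1, u_2$ both solve $\Delta_g u = -n$ with $u_i - \tfrac12\rho^2 \in L^2_{2,-\epsilon}$, then $w = u_1 - u_2$ is harmonic and lies in $L^2_{2,-\epsilon}$; by the weight-independence of $\ker \Delta_g$ across rate intervals disjoint from $\mc{D}$ (Theorem \ref{thm:fredholm}) and the triviality of decaying harmonic functions noted above, $w \equiv 0$. Finally, elliptic regularity applied to $\Delta_g u = -n$ gives $u \in C^\infty(M)$. Since the sharp weight $\lambda = 2+\nu+\epsilon$ is built directly into the solvability step, no separate bootstrapping is needed; the sole genuine obstacle is the vanishing of the cokernel pairings in the regime $\nu \le -n$.
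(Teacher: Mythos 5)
Your first and last steps coincide with the paper's own proof: the paper sets $p = \Delta_g \tfrac12\rho^2 + n$, proves $p \in L^2_{0,\nu+\epsilon}$ by exactly your first-variation estimate (lemma \ref{lemma:asymptotics}, where the Bianchi gauge serves only to kill the term $g_C(B_{g_C}h, du)$ in the variation formula for the Laplacian), solves $\Delta_g v = p$ via the weighted isomorphism of lemma \ref{lemma:LapIso}, and proves uniqueness as you do. The genuine gap is your ``obstruction step'': the claimed vanishing of the cokernel pairings for $\nu < -n$ is false, and no boundary analysis can repair it, because the pairing is a nontrivial global invariant. Take $w \equiv 1$ (rate $\beta = 0$, which is admissible precisely when $\nu < -n$; note that at $\nu = -n$ the weight $\lambda = 2-n+\epsilon$ still lies in the safe window $(2-n,0)$, so only $\nu<-n$ strictly is at issue). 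With the convention $\Delta_g = -\tr_g \Hess$, Green's identity gives
\[
\int_{\{\rho\le R\}} E\,\vol_g \;=\; -\int_{\{\rho=R\}} g\bigl(\nabla\tfrac12\rho^2,\,\mathbf{n}\bigr)\,dA_g \;+\; n\,\Vol_g(\{\rho\le R\}).
\]
The boundary term deviates from its conical value $-R^n\Vol(L,g_L)$ by $O(R^{n+\nu}) \to 0$, as you say. But the bulk term deviates from $R^n \Vol(L,g_L)$ by $n\int_{\{\rho\le R\}}(\vol_g - \vol_{g_C})$, whose integrand is $O(\rho^{n+\nu-1})$: for $\nu < -n$ this \emph{converges} to a finite limit, and convergence is not vanishing. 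Your bookkeeping ``the deviation of each from its conical value is of order $R^{\beta+n+\nu}$'' is correct only for the boundary term. The limit of the bulk deviation is a renormalized-volume type quantity, generically nonzero --- it is essentially the invariant of Biquard--Hein \cite{BH}, cited in the paper, which vanishes \emph{if and only if} the space is Euclidean $\R^4$ in their setting. So for $\nu<-n$ the compatibility conditions $\int_M E\,w\,\vol_g = 0$ typically fail, $v$ acquires a Green's-function term $c\,\rho^{2-n}$, and the decay improvement saturates at weight $2-n+\epsilon$ rather than $2+\nu+\epsilon$.

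To be fair, your instinct that something breaks below the weight $2-n$ is sound, and the paper does not handle this regime either: the proof of lemma \ref{lemma:LapIso} establishes only injectivity, and surjectivity via the duality cokernel $\cong \ker_{2-n-\lambda}\Delta_g$ (which you correctly identify) holds only for $\lambda \in (2-n,0)$, i.e.\@ effectively $\nu \geq -n$. That is the only regime actually used in theorems \ref{thm:topG2} and \ref{thm:topSU3}: an AC manifold of rate $\nu < -n$ is also AC of any rate in $(\nu,0)$, and the weight $2-n+\epsilon$ already suffices for the unweighted $L^2$ containment needed in the Hodge-theoretic argument. The correct conclusion in the regime $\nu<-n$ is therefore that the sharp decay claim must be weakened (or the rate relaxed), not that the obstructions vanish --- your step 3 as written would prove a statement that is false, e.g.\@ on ALE spaces with nonvanishing Biquard--Hein invariant. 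Note also that the hypothesis $\nu<-2$ enters not through any obstruction pairing but simply to ensure $\lambda = 2+\nu+\epsilon < 0$, so that $p$ decays and the isomorphism range of lemma \ref{lemma:LapIso} applies.
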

\begin{lemma}
  \label{lemma:LapIso}
  Let $(M,g)$ be an n-dimensional AC manifold.

  For $\lambda < 0$ the operator
  $$\Delta_g : L^2_{2, \lambda} \to L^2_{0,\lambda-2}$$
  is an isomorphism is an isomorphism if $\lambda$ is not a critical rate. There are no critical rates in the interval $(2-n, 0)$.
\end{lemma}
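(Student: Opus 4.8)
The plan is to prove Lemma~\ref{lemma:LapIso} by combining the Fredholm theory of Theorem~\ref{thm:fredholm} with a weighted $L^2$ pairing argument to control both kernel and cokernel, and then to pin down the critical rates in $(2-n,0)$ by an explicit separation-of-variables computation on the cone.

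First I would compute the set $\mc{D}$ of critical rates directly. Writing the cone Laplacian in polar coordinates, $\Delta_{g_C} = -\partial_r^2 - \tfrac{n-1}{r}\partial_r + \tfrac{1}{r^2}\Delta_{g_L}$, a homogeneous harmonic function of rate $\lambda$ has the form $r^\lambda \phi$ with $\phi$ an eigenfunction of $\Delta_{g_L}$ with eigenvalue $\mu$. Plugging in, one finds $r^\lambda\phi$ is harmonic precisely when
\begin{equation*}
  \lambda(\lambda + n - 2) = \mu.
\end{equation*}
Since $\Delta_{g_L} \geq 0$ with lowest eigenvalue $\mu = 0$ (attained only by constants), the product $\lambda(\lambda+n-2)$ must be a nonnegative eigenvalue of $\Delta_{g_L}$. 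For $\lambda$ in the open interval $(2-n, 0)$ the quantity $\lambda + n - 2$ is positive while $\lambda$ is negative, so $\lambda(\lambda + n - 2) < 0$, which cannot equal any $\mu \geq 0$. Hence there are no critical rates in $(2-n,0)$, giving the second assertion of the lemma. (The endpoints $\lambda = 0$ and $\lambda = 2 - n$ correspond to $\mu = 0$, i.e.\ the constant and the fundamental-solution rates, and are excluded.)

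Next, for the isomorphism statement, fix $\lambda < 0$ with $\lambda \notin \mc{D}$. By Theorem~\ref{thm:fredholm}(1) the operator $\Delta_g : L^2_{2,\lambda} \to L^2_{0,\lambda-2}$ is Fredholm, so it suffices to show it is both injective and surjective. For injectivity, suppose $\Delta_g u = 0$ with $u \in L^2_{2,\lambda}$. Since $\lambda < 0$ and there are no critical rates in $(2-n,0)$ (equivalently, by Theorem~\ref{thm:fredholm}(3) the kernel $\ker_\lambda \Delta_g$ is constant across any rate interval avoiding $\mc{D}$), I would push the kernel element down to a small negative rate just above $2-n$; a harmonic function decaying faster than any positive power near infinity must, by the maximum principle or by the removable-singularity/growth estimates for harmonic functions on AC manifolds, be identically zero. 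Thus $\ker_\lambda \Delta_g = 0$. For surjectivity, I would identify the cokernel with the kernel at the dual weight: the formal $L^2$ adjoint of $\Delta_g : L^2_{2,\lambda} \to L^2_{0,\lambda-2}$ with respect to the pairing induced by the $\rho^{-n}\vol_M$ measure is again $\Delta_g$, acting on the conjugate weight $\lambda' = 2 - n - \lambda$. The cokernel therefore vanishes iff $\ker_{\lambda'} \Delta_g = 0$. Since $\lambda < 0$ forces $\lambda' = 2 - n - \lambda > 2 - n$, and in fact $\lambda' < 0$ would require $\lambda > 2-n$ while $\lambda' \geq 0$ is handled by the same decay argument once one checks the relevant weight still avoids $\mc{D}$, the adjoint kernel also vanishes. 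Combining injectivity and vanishing cokernel with the Fredholm property yields the isomorphism.

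The main obstacle I anticipate is the surjectivity half, specifically getting the weighted-duality bookkeeping exactly right and ruling out the adjoint kernel. The dual weight $\lambda' = 2-n-\lambda$ can land on either side of the relevant thresholds depending on how negative $\lambda$ is, so one must verify that $\lambda'$ stays clear of $\mc{D}$ and then invoke the correct decay/vanishing statement for $\Delta_g$-harmonic functions at that weight. The clean way to organize this is to prove first that \emph{any} $L^2$-type harmonic function with negative rate vanishes — using that such a function is both decaying at infinity and defined on a complete manifold, so the Bochner formula $\tfrac12\Delta|{\nabla u}|^2 = -|\Hess u|^2 - \Ric(\nabla u,\nabla u) \leq 0$ (or directly integration by parts, legitimate because the decay kills boundary terms) forces $u$ to be constant, hence zero by the decay — and then feed both $\lambda$ and $\lambda'$ into this single vanishing lemma. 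Care is needed because the Ricci term is not a priori signed on a general AC manifold, so integration by parts against $u$ itself, $\int |\nabla u|^2 \rho^{-n} = 0$, which requires only that the weight $\lambda$ make $u\,\nabla u$ integrable with vanishing boundary flux, is the more robust tool and is the step I would write out carefully.
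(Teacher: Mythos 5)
Your treatment of the critical rates and of injectivity is essentially the paper's own argument: the same separation-of-variables identity $\Delta_{g_C} r^\lambda f = r^{\lambda-2}\bigl(\Delta_{g_L} f - \lambda(\lambda+n-2)f\bigr)$ together with the non-negativity of $\Delta_{g_L}$ rules out critical rates in $(2-n,0)$, and the kernel is killed by integration by parts at sufficiently negative weight and then propagated to all $\lambda<0$ via Theorem \ref{thm:fredholm}(3). Two side remarks of yours are imprecise but harmless: a kernel element pushed to a weight just above $2-n$ decays like $r^{2-n+\epsilon}$, not ``faster than any positive power,'' and the relevant point is that the boundary flux $\int_{S_R} u\,\partial_r u \sim R^{\,n-2+2\lambda}$ vanishes exactly when $\lambda < \tfrac12(2-n)$ (this is the threshold the paper uses); your Bochner detour is unnecessary, and the pairing argument $\int u\,\Delta_g u = \int |\nabla u|^2$ that you settle on is the correct tool.

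The genuine gap is in the surjectivity bookkeeping. You assert that when the dual weight $\lambda' = 2-n-\lambda$ satisfies $\lambda' \geq 0$ it ``is handled by the same decay argument.'' It is not: at non-negative weight there is no decay to exploit, and in fact $\ker_{\lambda'}\Delta_g \neq 0$ for \emph{every} $\lambda' > 0$, because the constant function $1$ lies in $L^2_{2,\lambda'}$ (its weighted norm is comparable to $\int^\infty r^{-2\lambda'-1}\,dr < \infty$). Consequently, for $\lambda < 2-n$ the operator is injective but not surjective; already on $\R^n$, if $f$ is compactly supported with $\int f \neq 0$, every decaying solution of $\Delta u = f$ carries a nonzero $c\,r^{2-n}$ term, so no solution lies in $L^2_{2,\lambda}$ with $\lambda < 2-n$. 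Your duality argument therefore closes only for $\lambda \in (2-n,0)$, where $\lambda' = 2-n-\lambda$ again lies in $(2-n,0)$ and the kernel at the dual weight vanishes by the first part. (The same caveat applies to the lemma's literal statement and to the paper's one-line conclusion ``Fredholm, therefore an isomorphism''; the interval $(2-n,0)$ is the range in which the proof actually works, and it is the only range needed in the proof of Theorem \ref{thm:approx_potential}.) You should restrict the isomorphism claim to $\lambda \in (2-n,0)$ rather than attempt to rule out the adjoint kernel for all negative weights.
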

\begin{proof}
  First observe that by partial integration for $\lambda < \frac 1 2 (2 - n)$ any harmonic function on $M$ is trivial and therefore $\ker_\lambda \Delta_g = 0$ for this range of weights.

  Let $(C, g_C)$ denote the cone at infinity of $(M,g)$. Let $(L,g_L)$ be its link. To see that $\lambda \in (2-n,0)$ is not a critical rate, observe that for any function $r^\lambda f$ with $f \in C^\infty(L)$ we have
  $$\Delta_{g_C} r^\lambda f = r^{\lambda-2} (\Delta_{g_L} f - \lambda (\lambda + n - 2) f).$$
  Therefore $r^\lambda f$ can only be harmonic if $\Delta_{g_L} f = \lambda (\lambda + n - 2) f$. But for $\lambda \in (2-n, 0)$ the coefficient $\lambda (\lambda + n - 2)$ is negative. On the other hand, $\Delta_{g_L}$ is a non-negative operator and therefore $f = 0$. Thus there are no non-trivial harmonic, homogeneous functions of rate $\lambda \in (2-n, 0)$.

  This also shows that $\ker_\lambda \Delta_g = 0$ for every $\lambda < 0$ by theorem \ref{thm:fredholm} (3), since $2-n < \frac 1 2 (2-n) < 0$.

  Whenever $\lambda$ is not a critical rate $\Delta_g : L^2_{2,\lambda} \to L^2_{0,\lambda-2}$ is a Fredholm operator and therefore when $\lambda < 0$ it is an isomorphism.
\end{proof}

\begin{lemma}
  \label{lemma:asymptotics}
  Let $(M,g)$ be an asymptotically Riemannian manifold with cone at infinity $(C, g_C)$ and  rate $\nu < -2$. Suppose that $\Psi : (R, \infty) \times K \to M \bs K$ are coordinates at infinity, such that $B_{g_C} \Psi^* g = 0$ and let $\rho : M \to \R$ be a radial function. Then for every $\epsilon > 0$
  $$\Delta_g \rho^2 + 2 n \in L^2_{0, \nu+\epsilon}.$$
\end{lemma}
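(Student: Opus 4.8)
The plan is to reduce the claim to a pointwise decay estimate on the end, where $\rho = r$, and then check integrability against the weighted measure. Note first that $\Delta_g \rho^2 + 2n$ is a smooth function on all of $M$; on the compact core and the transition region where $\rho$ differs from $r$ it is bounded and therefore contributes only a finite amount to any weighted $L^2$-norm. Hence everything reduces to the region $(R_0,\infty)\times L$ on which $\Psi^*\rho = r$, and it suffices to prove the pointwise bound $|\Delta_g r^2 + 2n| \le C r^{\nu}$ there, computed in the coordinates $\Psi$ with $\Psi^* g = g_C + h$.

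The conical identity $\Hess_{g_C}\tfrac12 r^2 = g_C$ gives $\Delta_{g_C} r^2 = -\tr_{g_C}\Hess_{g_C} r^2 = -2n$, so that $\Delta_{g_C} r^2 + 2n = 0$ \emph{exactly}. I would therefore write $\Delta_g r^2 + 2n = \Delta_g r^2 - \Delta_{g_C} r^2$ and estimate the difference of the two Laplacians acting on the same function $r^2$. Setting $h = \Psi^* g - g_C$, the asymptotically conical hypothesis supplies the decay $|\nabla_{g_C}^k h|_{g_C} \le C_k r^{\nu-k}$ for all $k$. Writing $A = \nabla^g - \nabla^{g_C}$ for the tensorial difference of Levi--Civita connections, a direct computation of the Hessian in a frame yields, for any function $u$,
\[
\Delta_g u - \Delta_{g_C} u = -\langle g^{-1}-g_C^{-1},\, \Hess_{g_C} u\rangle + \langle \tr_g A,\, du\rangle,
\]
where $\langle\cdot,\cdot\rangle$ denotes the full $g_C$-contraction and $\tr_g A = g^{ij}A^k_{ij}\,\partial_k$.

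For $u = r^2$ one has $\Hess_{g_C} r^2 = 2 g_C$ and $d(r^2) = 2r\,dr$, both of which I would estimate in the $g_C$-norm: $|\Hess_{g_C} r^2|_{g_C} = 2\sqrt{n}$ and $|d(r^2)|_{g_C} = 2r$. The remaining two factors carry the decay: from $g^{-1}-g_C^{-1} = -g^{-1} h\, g_C^{-1}$ one gets $|g^{-1}-g_C^{-1}|_{g_C} = O(r^\nu)$, while $A$ is built algebraically from $\nabla_{g_C} h$ and $g^{-1}$, whence $|A|_{g_C} = O(r^{\nu-1})$ and $|\tr_g A|_{g_C} = O(r^{\nu-1})$. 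Feeding these into the identity and applying Cauchy--Schwarz in $g_C$, the first term is $O(r^\nu)$ and the second is $O(r^{\nu-1}\cdot r) = O(r^\nu)$, so $|\Delta_g r^2 + 2n| \le C r^{\nu}$ on the end. Integrability then follows directly from the definition of the weighted norm: on the end $\vol_M$ is comparable to $\vol_{g_C} \sim r^{n-1}\,dr\,d\vol_L$, so with weight $\lambda = \nu+\epsilon$ the integrand $|\rho^{-\lambda}(\Delta_g\rho^2+2n)|^2\rho^{-n}$ is $O(r^{-2(\nu+\epsilon)}r^{2\nu}r^{-n})$ and, against the volume element, contributes $\int_{R_0}^\infty r^{-2\epsilon-1}\,dr < \infty$ for every $\epsilon > 0$, giving $\Delta_g\rho^2 + 2n \in L^2_{0,\nu+\epsilon}$.

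The only delicate point — and thus the main obstacle — is the bookkeeping of homogeneities in the difference formula: both $\Hess_{g_C} r^2$ and $d(r^2)$ \emph{grow}, boundedly and linearly respectively, so the decay rate $\nu$ is recovered only because differentiating $h$ once (which enters through the connection-difference tensor $A$) costs exactly one power of $r$, precisely compensating the growth of $d(r^2)$. I would also remark that this crude $O(r^\nu)$ bound does not actually use the Bianchi-gauge hypothesis $B_{g_C} h = 0$; that assumption is retained here only for consistency with the surrounding statements, where it is needed for the sharper expansion in Theorem \ref{thm:approx_potential}. A slightly slicker variant avoids the connection-difference tensor altogether by expanding $\Delta_g r^2 = -2|dr|_g^2 + 2r\,\Delta_g r$ and inserting the asymptotics $|dr|_g^2 = 1 + O(r^\nu)$ and $\Delta_g r = -(n-1)/r + O(r^{\nu-1})$, which follow from the same metric estimates.
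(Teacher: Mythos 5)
Your proof is correct, and its overall strategy coincides with the paper's: reduce to the conical end, subtract the exact identity $\Delta_{g_C} r^2 = -2n$, establish the pointwise bound $|\Delta_g r^2 + 2n| \leq C r^\nu$ from the decay of $h = \Psi^* g - g_C$ and $\nabla^{g_C} h$, and then check weighted integrability, which produces the same exponent $\int_{R_0}^\infty r^{-2\epsilon-1}\,dr$ as in the paper. Where you genuinely differ is in how the difference of Laplacians is controlled. The paper Taylor-expands in $h$, using the linearization $\frac{d}{dt}\big|_{t=0} \Delta_{g+th} u = -2 g(\Hess_g u, h) - g(B_g h, du)$, invokes the hypothesis $B_{g_C} h = 0$ to discard the term linear in $\nabla h$, and bounds the quadratic remainder $Q$ by $C|h|^2(|du| + |\nabla^{g_C} du|)$, arriving at $|\Delta_{\widetilde{g}} r^2 + 2n| \leq \hat{C}(r^\nu + r^{2\nu+1})$ with $2\nu+1 < \nu$. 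You instead use the exact (non-perturbative) identity via the connection-difference tensor $A$, keeping the $\nabla h$-linear contribution and estimating it directly as $O(r^{\nu-1}) \cdot O(r) = O(r^\nu)$. This buys two things. First, there is no remainder bookkeeping: the paper's stated bound on $Q$ in fact omits cross terms of type $|h|\,|\nabla^{g_C} h|\,|du|$ (harmless, being $O(r^{2\nu})$), whereas your exact identity sidesteps the issue entirely. Second, it substantiates your correct observation that the Bianchi gauge is not needed for this lemma: even within the paper's expansion, the discarded term $-g_C(B_{g_C} h, du)$ would be $O(r^{\nu-1}) \cdot O(r) = O(r^\nu)$, i.e.\@ of the same order as the trace term that is kept, so the gauge hypothesis affects only the shape of the expansion, not the conclusion $\Delta_g \rho^2 + 2n \in L^2_{0,\nu+\epsilon}$. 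Your explicit handling of the compact core and transition region (which the paper leaves implicit) and your closing variant via $\Delta_g r^2 = -2|dr|_g^2 + 2r\,\Delta_g r$ are both sound; note that with the paper's convention $\Delta_g = -\tr_g \Hess$ one indeed has $\Delta_{g_C} r = -(n-1)/r$, consistent with your expansion.
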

\begin{proof}
  Denote the pullback of $g$ to the cone via $\Psi$ by $\widetilde{g} = \Psi^* g$. Then
  $$\Psi^* \Delta_g \rho^2 = \Delta_{\widetilde{g}} r^2 + 2n,$$
  because $\Psi^* \rho^2 = r^2$. Since $\Delta_{g_C} r^2 = -2n$, we may rewrite this as
  $$\Psi^* \Delta_g \rho^2 = \Delta_{\widetilde{g}} r^2 - \Delta_{g_C} r^2.$$
  To estimate the difference $\Delta_{\widetilde{g}} - \Delta_{g_C}$, recall the general formula
  $$\frac{d}{dt}\Big|_{t=0} \Delta_{g+th} u = -2 g(\Hess_g u, h) - g(B_g h, du),$$
  where $g$ is any Riemannian metric and $h$ a symmetric 2-tensor and $u$ is any function.

  For $g = g_C$ and $h = \widetilde{g} - g_C$ we therefore get
  $$\Delta_{\widetilde{g}} u = \Delta_{g_C} u - 2 g_C (\Hess_{g_C} u, h) + Q(h, du, \nabla^{g_C} du)$$
  for some $Q$ satisfying
  $$|Q(h, du, \nabla^{g_C} du)| \leq C |h|^2 ( |du|+ |\nabla^{g_C} du| ).$$
  The absence of a term depending on the zero order term $u$ is explained by the definition $\Delta_g = - \tr_g \nabla^g du$ and the fact that the operator $u \mapsto du$ does not depend on the metric.

  If we now let $u = \Psi^* \rho^2 = r^2$, we have $\Hess_{g_C} u = 2 g_C$ and since $g_C(g_C, h) = \tr_{g_C} h$, this yields
  $$\Delta_{\widetilde{g}} r^2 = \Delta_{g_C} u - 4 \tr_{g_C} h + Q(h, du, \nabla^{g_C} du).$$
  By definition of AC manifolds, we then have that there exists a constant $C_0$, such that
  $$|h|_{g_C} = |\widetilde{g} - g_C|_{g_C} \leq C_0 r^\nu.$$
  Therefore, $|\Delta_{\widetilde{g}} r^2 - 2n| \leq \hat{C}( r^\nu + r^{2\nu + 1})$ for some constant $\hat{C} > 0$. This yields the result, since $2\nu + 1 < \nu$ and $r^\nu \in L^2_{0, \nu+\epsilon}$ for every $\epsilon > 0$.
\end{proof}

\begin{proof}
  Define $p = \Delta_g \frac 1 2 \rho^2 + n$. By lemma \ref{lemma:LapIso}, the operator
  $$\Delta_{g_{AC}} : L^2_{2, \lambda} \to L^2_{0,\lambda-2}$$
  is an isomorphism for every $\lambda < 0$ that is not a critical rate.

  Lemma \ref{lemma:asymptotics} says that $p \in L^2_{0, \nu+\epsilon}$ for every $\epsilon > 0$. Since the set of critical rates is discrete, it follows that $\Delta_g : L^2_{2,\nu+\epsilon} \to L^2_{0, \nu+\epsilon}$ is an isomorphism for every $\epsilon \in (0, \epsilon_0)$ for some $\epsilon_0 > 0$, even if $\nu$ is a critical rate. Therefore $v = \Delta_g^{-1} p \in L^2_{2,\nu+2+\epsilon}$ is well defined for every $\epsilon \in (0, \epsilon_0)$.

  Now consider $u = \frac 1 2 \rho^2 - v$. Then we have
  $$\Delta_g u = \frac 1 2 \Delta_g \rho^2 - \Delta_g v = p - p - n = -n.$$
  This shows existence and the decay property.

  For uniqueness, assume that there exists another function $\hat{u}$ with $\hat{u} - \frac 1 2 \rho^2 \in L^2_{2, -\widetilde{\epsilon}}$ for some $\widetilde{\epsilon} > 0$. Let $\lambda = \max\{\nu+2+\epsilon, -\widetilde{\epsilon} \}$.  Then $\hat{u} - u \in L^2_{2, \lambda}$ and $\Delta_g (\hat{u} - u) = 0$. By lemma \ref{lemma:LapIso} it follows that $\hat{u} - u = 0$.
\end{proof}

\section{The action of $\Sym^2$ on $\Lambda^*$}

Let $(M,g)$ be a Riemannian manifold. If $h \in \Gamma(\Sym^2 T^*M)$ and $\kappa \in \Omega^k(M)$, then we define
$$h_* \kappa = -\sum_{j=1}^n h(e_j, \cdot) \wedge \iota_{e_j} \kappa,$$
where $e_1, \ldots, e_n$ is any orthonormal frame. Using the standard basis of $\Lambda^* T^*M$ induced by the dual frame $e^1, \ldots, e^n$ it is easy to see that
$$g_* \kappa = -k \kappa.$$
Recall that for any differential form $\eta \in \Omega^k(M)$ we have
$$d^* \eta = -\sum_{j=1}^n \iota_{e_i} \nabla_{e_i} \eta.$$
\begin{prop}
  \label{prop:dstarwedge}
  If $\kappa \in \Omega^k(M)$ is parallel and $u \in C^\infty(M)$, then
  $$d^* (du \wedge \kappa) = (d^*du)\kappa - (\Hess u)_* \kappa.$$
\end{prop}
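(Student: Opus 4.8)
The plan is to compute $d^*(du \wedge \kappa)$ directly from the codifferential formula stated just above the proposition, $d^* \eta = -\sum_{i} \iota_{e_i} \nabla_{e_i} \eta$, exploiting that $\kappa$ is parallel. First I would set $\eta = du \wedge \kappa$ and differentiate using the Leibniz rule for $\nabla$:
\[
\nabla_{e_j}(du \wedge \kappa) = (\nabla_{e_j} du) \wedge \kappa + du \wedge \nabla_{e_j}\kappa.
\]
Since $\nabla \kappa = 0$, the second summand drops out. Writing $\Hess u = \nabla du$, the remaining factor is the $1$-form $\nabla_{e_j} du = \Hess u(e_j, \cdot)$, so that $\nabla_{e_j}\eta = \Hess u(e_j,\cdot)\wedge \kappa$. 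Because the codifferential formula is tensorial in the covariant derivatives, any orthonormal frame may be used; no special (e.g.\ geodesic) frame is needed.

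Next I would apply the interior-product Leibniz rule $\iota_{e_j}(\alpha \wedge \beta) = (\iota_{e_j}\alpha)\,\beta + (-1)^{|\alpha|}\,\alpha \wedge \iota_{e_j}\beta$ with $\alpha = \Hess u(e_j,\cdot)$ a $1$-form (so the sign is $-1$). Since $\iota_{e_j}\big(\Hess u(e_j,\cdot)\big) = \Hess u(e_j,e_j)$, this yields
\[
\iota_{e_j}\nabla_{e_j}\eta = \Hess u(e_j,e_j)\,\kappa - \Hess u(e_j,\cdot)\wedge \iota_{e_j}\kappa.
\]
Summing over $j$ and inserting the overall minus sign gives
\[
d^*\eta = -\sum_j \Hess u(e_j,e_j)\,\kappa + \sum_j \Hess u(e_j,\cdot)\wedge \iota_{e_j}\kappa.
\]

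It then remains to identify the two resulting terms. For the first, I would use $\sum_j \Hess u(e_j,e_j) = \tr_g \Hess u$, so that $-\sum_j \Hess u(e_j,e_j) = -\tr_g \Hess u = \Delta_g u = d^*du$, using the paper's convention $\Delta_g = -\tr_g \Hess_g$ and the fact that on functions $d^*du = \Delta_g u$. For the second, comparison with the definition $h_*\kappa = -\sum_j h(e_j,\cdot)\wedge \iota_{e_j}\kappa$ with $h = \Hess u$ shows that $\sum_j \Hess u(e_j,\cdot)\wedge \iota_{e_j}\kappa = -(\Hess u)_*\kappa$. Substituting both identifications produces exactly $d^*(du\wedge\kappa) = (d^*du)\kappa - (\Hess u)_*\kappa$, as claimed. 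The computation is elementary; the only point requiring care is bookkeeping of the sign conventions, specifically the sign in $\Delta_g = -\tr_g \Hess_g$ and the sign in the definition of $h_*$, which I expect to be the main place an error could creep in.
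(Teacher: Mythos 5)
Your proposal is correct and follows essentially the same route as the paper's proof: apply $d^*\eta = -\sum_i \iota_{e_i}\nabla_{e_i}\eta$, use the Leibniz rule together with $\nabla\kappa = 0$, expand the interior product on the wedge of a $1$-form with $\kappa$, and identify the trace term with $(d^*du)\kappa$ and the remaining sum with $-(\Hess u)_*\kappa$. Your sign bookkeeping ($\Delta_g = -\tr_g\Hess_g$, the minus in the definition of $h_*$) matches the paper's conventions, so nothing is missing.
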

\begin{proof}
  This follows from repeated use of the product rule and the definition $\Hess u = \nabla du$:
  \begin{align*}
    d^* (du \wedge \kappa) & = - \sum_{j=1}^n \iota_{e_i} \nabla_{e_i} (du \wedge \kappa)
     = - \sum_{j=1}^n \iota_{e_i} [(\nabla_{e_i} du) \wedge \kappa] \\
    & = - \sum_{j=1}^n \left[ (\iota_{e_i} \nabla_{e_i} du) \wedge \kappa - (\nabla_{e_i} du) \wedge \iota_{e_i} \kappa \right] \\
    & = (d^* du) \kappa - (\nabla du)_* \kappa.
  \end{align*}
\end{proof}

In general, not much can be said about $h_* \kappa$. For the forms associated to $G_2$ and $\SU(3)$ structures, the action of $\Sym^2$ can be described more precisely.

Suppose that $M$ is a $7$-manifold and $\varphi \in \Omega^3_+(M)$ is a $G_2$ structure. Let $g$ be the associated Riemannian metric. Then we can decompose
$$\Sym^2 T^*M = \R g \oplus \Sym^2_0 T^*M,$$
where $\Sym^2_0 T^*M = \{ h \in \Sym^2 T^* M : \tr_{g} h = 0\}$. It turns out that the map
$$\Sym^2_0 T^*M \to \Lambda^3 T^*M, \qquad h \mapsto h_* \varphi$$
is injective. This can be understood much better from the representation theoretic point of view. For this we refer to \cite{Bryant}.

If, on the other hand, $M$ is a 6-manifold and $(\omega, \Omega)$ is a $\SU(3)$ structure, then let $g$ denote the associated metric and $J$ the associated almost complex structure. In this case the space $\Sym^2 T^*M$ can be decomposed as
$$\Sym^2 T^*M = \R g \oplus \Sym^2_8 \oplus \Sym^2_{12},$$
where
$$\Sym^2_8 = \{h \in \Sym^2 T^*M : \tr_g h = 0, \; J^* h = h \},$$
$$\Sym^2_{12} = \{h \in \Sym^2 T^*M : J^* h = -h \}.$$
The subscripts denote the dimension of each of these spaces. Given $h \in \Sym^2_0 T^*M$ we may decompose
$$h = \underbrace{[h]_8}_{\in \Sym^2_8} + \underbrace{[h]_{12}}_{\in \Sym^2_{12}}.$$
With this notation, one finds that
$$h_* \omega = ([h]_8)_* \omega,$$
$$h_* \Real \Omega = ([h]_{12})_* \Real \Omega$$
and the maps
$$\Sym^2_8 \to \Lambda^2 T^*M, \qquad h \mapsto h_* \omega,$$
$$\Sym^2_{12} \to \Lambda^3 T^*M, \qquad h \mapsto h_* \Real \Omega$$
are injective. This decomposition is also best understood from the representation theoretic point of view and we refer to \cite{Foscolo} for more details.

\section{Proofs of theorems \ref{thm:topG2} and \ref{thm:topSU3}}
\label{sec:proofs}
Before we enter the proofs, let us make an observation about $G_2$ and Calabi--Yau cones.

If $\varphi$ is a torsion free $G_2$ structure, then $\varphi$ is parallel and therefore proposition \ref{prop:dstarwedge} applies. If $(C, \varphi_C)$ is a $G_2$ cone, then $\frac 1 2 r^2$ is a potential for the associated metric $g_C$. Therefore, if we apply the formula from the proposition with $u = r^2/2$, we find that
$$d^* (d r^2/2 \wedge \varphi_C) = -7 \varphi_C - g_*\varphi_C = -4 \varphi_C.$$
Applying the Hodge star to this equation shows that $*\varphi_C$ is exact.

Similarly, if $(\omega, \Omega)$ is a torsion free $\SU(3)$ structure, then $\omega$ and $\Omega$ are parallel. Then if $(C, \omega_C, \Omega_C)$ is a Calabi--Yau cone, then by considering $d^* (d r^2/2 \wedge \omega_C)$ and $d^* (d r^2/2 \wedge \Imag \Omega_C)$, we find that $*\omega_C = \frac 1 2 \omega_C^2$ and $* \Imag \Omega_C = - \Real \Omega_C$ are exact.

The idea of the proofs is to ``reverse'' this observation in a certain sense: if the $G_2$ forms or $\SU(3)$ forms on the AC manifolds are exact, then the approximate potential considered in section \ref{sec:approx_potential} must in fact be a potential function, i.e.\@ $r^2/2$.

\begin{proof}[Proof of theorem \ref{thm:topG2}]
  Suppose that $(M,g,\varphi)$ is an asymptotically conical $G_2$ manifold of rate $\nu < -7/2$.

  If $(M,g)$ is isometric to $(\R^7, g_{\mathrm{eucl}})$, then the closed form $*\varphi$ is exact by the Poincaré lemma.

  For the converse, assume that $*\varphi$ is exact. We will show that $(M,g)$ is isometric to $(\R^7, g_{\mathrm{eucl}})$.

  By theorem \ref{thm:approx_potential}, there exists a function $u : N \to \R$ with
  $$\Delta_{g_\varphi} u = -7, \qquad \Psi^* u - \frac 1 2 \rho^2 \in L^2_{2, \lambda},$$
  where $\lambda = 2 + \nu +\epsilon$ for any $\epsilon > 0$. The tracefree part of the Hessian of $u$, $\mathring{\Hess}u$, is then in $L^2_{0,\lambda-2}$. The form
  $$\kappa = * (\mathring{\Hess}u)_* \varphi$$
  is then also in $L^2_{0, \lambda-2}$. Note that for sufficiently small $\epsilon > 0$ the space $L^2_{0, \lambda-2}$ is contained in the unweighted $L^2$ space, since $\nu < -7/2$. We will now show that $\kappa$ is closed and coclosed. The assumption that $*\varphi$ is  exact will imply that $\kappa$ is exact.

  Applying proposition \ref{prop:dstarwedge} yields 
  $$d^*( du \wedge \varphi) = -7 \varphi - (\Hess u)_* \varphi.$$
  The Hessian of $u$ can be split into a tracefree part $\mathring{\Hess}u$ and its trace part,
  $$\Hess u = \mathring{\Hess} u + \frac 1 7 (\tr_g \Hess u) g = \mathring{\Hess} u + g,$$
  where we use $\tr_g \Hess u = -\Delta_g u = -7$. Since $\varphi$ is a 3-form, we have $g_* \varphi = -3 \varphi$ and therefore $(\Hess u)_* \varphi = (\mathring{\Hess} u)_* \varphi - 3 \varphi$. Therefore,
  $$(\mathring{\Hess} u)_* \varphi = 4 \varphi - d^*( du \wedge \varphi).$$
  Since $d^* \varphi = 0$ and $(d^*)^2 = 0$, we obtain $d^* (\mathring{\Hess} u)_* \varphi = 0$. This is equivalent to $d \kappa = 0$. Applying the Hodge star operator to this equation, we see that 
  $$\kappa = 4 * \varphi - d*(du \wedge \varphi)$$
  using that $*d^* = d*$ on $\Omega^4(M)$. Therefore, if $*\varphi$ is exact, then so is $\kappa$. To see that $\kappa$ is coclosed, we compute
  $$d^* \kappa =  -d^* * d^*(du \wedge \varphi) = - * d d^* (du \wedge \varphi) = - * \Delta (du \wedge \varphi),$$
  where we used that $d^* \varphi = 0$ and that $du \wedge \varphi$ is closed. Formula (10) in \cite{KL} says that $\Delta (du \wedge \varphi) = (\Delta du) \wedge \varphi$. Moreover, $\Delta du = d \Delta u = 0$, since $\Delta u$ is constant and therefore $d^* \kappa = 0$ as claimed.

  According to theorem \ref{thm:hodge_theory} coclosed, exact $L^2$ forms of form degree $\geq 4$ are trivial. Since we checked all of these conditions for $\kappa$, we conclude that $\kappa = 0$. Thus $\mathring{\Hess} u = 0$ and since $\Delta_g u = -n$, this means
  $$\Hess u = g.$$
  Therefore Tashiro's theorem \ref{thm:tashiro} implies that $(M,g)$ is isometric to $(\R^7, g_{\mathrm{eucl}})$.
\end{proof}

\begin{proof}[Proof of theorem \ref{thm:topSU3}]
  Let $(M, g, \omega, \Omega)$ be an asymptotically conical 6-dimensional Calabi--Yau manifold of rate $\nu < -3$.

  If $(M, g)$ is isometric to $(\R^6, g_{\mathrm{eucl}})$, then obviously $\omega$ and $\Imag \Omega$ are exact.

  For the other direction, we assume that $\omega^2$ and $\Imag \Omega$ are exact. We will show that then $(M, g)$ is isometric to $(\R^6, g_{\mathrm{eucl}})$.

  By theorem \ref{thm:approx_potential}, there exists a function $u : M \to \R$ with
  $$\Delta_g u = -6, \qquad \Psi^* u - \frac 1 2 \rho^2 \in L^2_{2, \lambda},$$
  where $\lambda = 2+\nu+\epsilon$ for any $\epsilon > 0$. The tracefree part of the Hessian of $u$, $\mathring{\Hess}u$, is then in $L^2_{0,\lambda-2}$. The forms
  $$\eta = *(\mathring{\Hess}u)_* \Real \Omega, \qquad \kappa = * (\mathring{\Hess}u)_* \omega$$
  are then also in $L^2_{0, \lambda-2}$. If $\epsilon$ is sufficiently small, they will also be in the unweighted $L^2$ space, since $\nu < -3$. We will now show that $\eta$ and $\kappa$ are closed and coclosed. The assumption that $\Imag \Omega$ is exact will imply that $\eta$ is exact. Likewise, the assumption that $*\omega = \frac 1 2 \omega^2$ is exact will imply that $\kappa$ is exact.

  Using proposition \ref{prop:dstarwedge} we obtain the formulas
  $$d^*(du \wedge \omega) = - 6 \omega - (\Hess u)_* \omega, \qquad d^*(du \wedge \Real \Omega) = -6 \Real \Omega - (\Hess u)_* \Real \Omega.$$
  Using these formula we may argue exactly as in the previous proof to show that $\eta$ and $\kappa$ are exact, coclosed forms.

  By theorem \ref{thm:hodge_theory} coclosed, exact $L^2$ forms of form degree $\geq 3$ are trivial. Since we checked all these conditions for $\eta$ and $\kappa$, we conclude $\eta = 0$ and $\kappa = 0$.

  If we decompose
  $$\mathring{\Hess} u = \underbrace{[\mathring{\Hess} u]_8}_{\in \Sym^2_8} +  \underbrace{[\mathring{\Hess} u]_{12}}_{\in \Sym^2_{12}},$$
  then we have
  $$(\mathring{\Hess} u)_* \omega = ([\mathring{\Hess} u]_8)_* \omega,$$
  $$(\mathring{\Hess} u)_* \Real \Omega = ([\mathring{\Hess} u]_{12})_* \Real \Omega.$$
  Moreover, since these forms are $0$, it follows that
  $$[\mathring{\Hess} u]_8 = 0 \qquad \text{and} \qquad [\mathring{\Hess} u]_{12} = 0.$$
  This shows that $\mathring{\Hess} u \equiv 0$ and by Tashiro's theorem \ref{thm:tashiro} this means $(M,g)$ is isometric to $(\R^6, g_{\mathrm{eucl}})$.
\end{proof}

\end{document}